\documentclass{article}
\usepackage{a4wide}
\usepackage{authblk}
\usepackage[latin1]{inputenc}
\usepackage{amscd}
\usepackage{amsfonts}
\usepackage{amsgen}
\usepackage{amsmath}
\usepackage{amssymb}
\usepackage{amstext}
\usepackage{amsthm}
\usepackage{graphicx}
\usepackage{indentfirst}
\usepackage{latexsym}
\usepackage{bbm}
\usepackage{epsfig}
\usepackage{wrapfig}
\usepackage[all,knot,arc,import,poly]{xy}
\usepackage[usenames]{color}
\usepackage[mathscr]{eucal}
\usepackage{ dsfont } 

\usepackage{MnSymbol}
\newtheorem{Df}{Definition}[section]
\newtheorem{Th}[Df]{Theorem}

\newtheorem{Lem}[Df]{Lemma}
\newtheorem{Ex}[Df]{Example}

\newtheorem{Rmk}[Df]{Remark}

\def\P{{\mathbb P}}
\def\N{{\mathbb N}}
\def\E{{\mathbb E}}

\def\Leb{\textrm{Leb}}

\def\lt{\left}
\def\rt{\right}


\hyphenation{ca-mi-nhos}

\title{Matching strings in encoded sequences}
\author{Adriana Coutinho, Rodrigo Lambert, J\'er\^ome Rousseau}
\date{}
%


\begin{document}

\maketitle

\begin{abstract}
We investigate the length of the longest common substring for encoded sequences and its asymptotic behaviour. The main result is a strong law of large numbers for a re-scaled version of this quantity, which presents an explicit relation with the R\'enyi entropy of the source. We apply this result to the zero-inflated contamination model and the stochastic scrabble. In the case of dynamical systems, this problem is equivalent to the shortest distance between two observed orbits and its limiting relationship with the correlation dimension of the pushforward measure. An extension to the shortest distance between orbits for random dynamical systems is also provided. 
\end{abstract}
\begin{center}
\textbf{keywords: }{string matching, R\'enyi entropy, shortest distance, correlation dimension, random dynamical systems, coding}
\end{center}
\begin{center}
\textbf{MSC: }{60F15, 60Axx, 60C05, 37A50, 37A25, 37Hxx, 37C45, 94A17, 68P30}
\end{center}

\section{Introduction}
Finding patterns on symbolic strings has been a widely studied subject matter on Genetics, Probability and Information Theory over the years. The investigations about how much information a $n$-string has on the whole realization of the process are naturally linked with the concept of redundancy and compression algorithms. On the other hand, the overlap between (some proportion of) two different strings can give us some knowledge about the similarity of the sources that generate those processes. Moreover, repetition and similarity are two well-exploited concepts in the study of DNA sequences.

In view of repetition, one of the earliest studied quantities was the well-known Ornstein-Weiss return time. A strong law of large numbers for this quantity and its explicit relationship with the entropy of the source was stated in \cite{OW} 
and the convergence in distribution has been widely studied (see e.g. the reviews \cite{AbadiGalves,Saussol,Hay}). 
An interesting and intuitive link between return times and the notion of data compression schemes can be found in \cite{WZ}, and a consistent estimator for the entropy based on that quantity was provided in \cite{KASW}. We remark also the first return of a string to its own $n$-cylinder (which is an outspread of the return times investigation), which can be found in \cite{ACS,STV,AbVa, haydnvaienti,AbLa1,GrKu,AbCa,AbGaRa}, and references therein.

On the other hand, the notion of coincidence has been exploited on the context of \emph{waiting times} \cite{DeKo,Wyner,WZ}. In \cite{Wyner} it was proved an exponential limiting distribution for the waiting time (properly re-scaled), when the source-measure is $\psi$-mixing with exponential decay of correlations. We recall that in this paper the author considered two independent copies of the same process. %

On the Erd\"os-R\'enyi scenario, the similarity between two sources has also been widely investigated (we refer the reader to \cite{DeKaZe1, DeKaZe2, Manson, Neuhauser} and references therein). As a recent example, we recall the shortest path between two observables defined in \cite{AbLa2}. 
In this work the authors  proved an almost-sure linear increasing, a large deviation principle and a weak convergence for the shortest path function. All these results were linked to the divergence between two measures, which is essentially a measure of similarity between the two source-measures. 

Holding on the same scenario, a remarkable matching quantity has been studied in \cite{ArWa}: $M_n (x,y)$, the length of the \emph{longest matching consecutive subsequence} (or longest common substring) between two sequences. 
More precisely, if $x$ and $y$ are two realizations of the stochastic processes $(X_n)_{n \in \mathbb{N}}$ and $(Y_n)_{n \in \mathbb{N}}$,
\begin{equation*}
M_n (x,y)= \displaystyle \max \left\{k \ : x_i^{i+k-1} = y_j^{j+k-1} \ \mbox{for some} \ 0 \leq i,j \leq n-k\right\} \ ,
\end{equation*}
where $x_i^{i+k-1}$ (respectively $y_j^{j+k-1}$) denotes the substring $x_ix_{i+1}\cdots x_{i+k-1}$ (respectively $y_jy_{j+1}\cdots y_{j+k-1}$).

If the two processes are independent and identically distributed, and generated by the same source $\mathds{P}$, the authors proved that $M_n/(\log_{1/p}n) \to 2$ for almost every realization $(x,y)$, with $p=\mathds{P}(X_0=Y_0)$ \cite{ArWa}.
Furthermore, they also proved that the same result holds for Markov chains,  but with $p$ being the largest eigenvalue of the matrix $[(p_{ij})^2]$, where $[p_{ij}]$ is the transition matrix. This result was recently generalized in \cite{BaLiRo} for $\alpha$-mixing processes with exponential decay and $\psi$-mixing processes with polynomial decay with a limit depending on the R\'enyi entropy of $\P$ {and in \cite{LCS-random} for random sequences in random environment.}
We recall that weak convergence theorems for sequence matching where also investigated over the last years (e.g \cite{Manson, Neuhauser}).

Further generalizations of such quantity has also appeared on the literature.
An interesting example was the sequence matching with scores introduced in \cite{ArMoWa}. In this paper, the authors consider that each symbol in the alphabet has a particular score (or weight). Therefore, each match score becomes a function which depends on the match size and the weights of the symbols as well. In the iid case and for Markov chains, they obtain a strong law of large numbers for the highest-scoring matching substring. A generalization for this statement (allowing incomplete matches, for instance) can be found in \cite{DeKaZe1, DeKaZe2}.

Following the direction of the pattern investigation between strings, one can ask if some of the above mentioned results hold if we transform our sequences following certain rules of modification. In other words: what happens if we consider encoded
sequences as our interest objects of investigation?

In this paper we study a version of the longest matching substring problem when the orbits are encoded by a measurable function (which we call \emph{{encoder}} or \emph{observation}, depending on the context). We call it the \emph{longest common substring between encoded strings}. More precisely, let $\chi$ (respectively $\tilde\chi$) be an alphabet, $\chi^{\mathds{N}}$ (respectively $\tilde\chi^{\mathds{N}}$) the space of all sequences with symbols in $\chi$ (respectively $\tilde\chi$) and let $f: \chi^{\mathds{N}} \to \tilde\chi^{\mathds{N}}$ be {a measurable function (following the terminology of \cite{KeSu}, we will call $f$ an encoder (one can also see \cite{Shields} where $f$ is called a coder))}.  
Given two sequences $x,y \in \chi^{\mathds{N}},$ we define the $n$-length of the longest common substring for the encoded pair $(f(x),f(y))$ by
\begin{equation*}
M_n^f (x,y)= \displaystyle \max \left\{k \ : f(x)_i^{i+k-1} = f(y)_j^{j+k-1} \ \mbox{for some} \ 0 \leq i,j \leq n-k\right\} \ ,
\end{equation*}
where $f\left(x\right)_i^{i+k-1}$ and $f\left(y\right)_j^{j+k-1}$ denotes the substrings (of the encoded sequences $f(x)$ and $f(y)$) of length $k$ beginning in $f(x)_i$ and $f(x)_j$ respectively.

In the symbolic case, we prove an almost sure convergence for $M_n^f$. 
Namely, we provide necessary conditions on the {encoder} as well as in the source to prove that $M_n^f$ grows logarithmically fast in $n$. It is in fact a law of large numbers with limiting rate linked with the R\'enyi entropy of the pushforward measure (denoted by ${H}_2(f_*\mathds{P})$). Namely, if $\mathds{P}$ is the source-measure, then
\begin{equation}\label{mr1}\tag{\footnotesize{$\largestar$}}
\lim_{n \to \infty}\frac{M_n^f(x,y)}{\log n} = \frac{2}{{H}_2(f_*\mathds{P})} \ \ \ \ \ \ \ \ \ \  \mathds{P} \otimes \mathds{P} \mbox{-a.s.}
\end{equation}

In the context of stochastic coding, \eqref{mr1} shows to be rather applicable. As a first illustration of this feature, we generalize the results from the \emph{stochastic scrabble} given by \cite{ArMoWa}, from a Markov chain to a general $\alpha$-mixing process with exponential decay. The second application deals with the stochastic noise (or contamination {encoder}), which can be viewed in \cite{CoGaLe,GaMo}.

Recently, \cite{BaLiRo} showed that the problem of the longest common substring for stochastic processes is related to the shortest distance between two orbits and, to the best of our knowledge, this was the first article where this quantity was defined and studied.
Following this idea, we can observe that, in dynamical systems, the correspondent of the longest common substring for the encoded
pair is the shortest distance between observed orbits. Formally, let $f:X \rightarrow Y$ be a measurable function, called the observation. If we consider a dynamical system $(X, T, \mu)$, we investigate the asymptotic behavior of
\begin{equation}\label{shortdist}
m_n^f(x,y) = \min_{i,j=0,\ldots,n-1}\left(d(f(T^ix),f(T^jy))\right) \ ,
\end{equation}
and prove that its limiting behavior is related to the correlation dimension of the pushforward measure $f_*\mu$ (denoted ${C}_{f_*\mu}$). If $f$ is the identity in $X$, we recover the shortest distance between two orbits problem, studied in \cite{BaLiRo}. In that paper the authors provide a law of large numbers and related it with the correlation dimension of the source measure. In the present paper we generalize this result for a family of {observations}, concluding that the limiting rate is given by the dimension of the pushforward measure $f_*\mu$ (under suitable conditions on $f$). Namely, for rapidly mixing systems,
\begin{equation}\label{mr2}\tag{\footnotesize{$\largestar \largestar$}}
\underset{n \rightarrow \infty}{\lim}\frac{\log m_n^f(x,y)}{-\log n} = \frac{2}{{C}_{f_*\mu}} \ \ \ \ \ \ \ \ \ \  \mu \otimes \mu  \mbox{-a.s.} \ ,
\end{equation}
provided that ${C}_{f_*\mu}$ exists.

In \cite{RS, MaRousseau,Rousseau}, the study of observed orbits (in particular, the study of return and hitting time) was used to obtain results for random dynamical systems. Following this idea, we combine $\eqref{mr2}$ with a particular observation $f$ to obtain the following strong law of large numbers for random dynamical systems (provided that $C_{\nu}$ exists)
\begin{equation}\label{mr3}\tag{\footnotesize{$\largestar \largestar \largestar$}}
\underset{n \rightarrow \infty}{\lim}\frac{\log m_n^{\omega, \tilde{\omega}}(x,\tilde{x})}{-\log n} = \frac{2}{C_{\nu}} \ \ \ \ \ \ \ \ \ \  \mu \otimes \mu  \mbox{-a.s.} \ ,
\end{equation}
where $m_n^{\omega, \tilde{\omega}}(x,\tilde{x})$ is the shortest distance between two random orbits ($\{x, T_\omega x,...,T^n_\omega x\}$ and $\{\tilde{x}, T_{\tilde{\omega}} \tilde{x},...,T^n_{\tilde{\omega}} \tilde{x}\}$) and $C_{\nu} $ is the correlation dimension of the stationary measure (we refer the reader to section \ref{random} for more details).
We present then a collection of applications for this statement. The first one treats non i.i.d. random dynamical systems. The second deals with random perturbed dynamics. We finish the applications with random hyperbolic toral automorphisms.

The rest of this paper is organized as follows. In Section \ref{renyi} we study the relation between the longest common substring for encoded sequences and the R\'enyi entropy. We state precisely result \eqref{mr1} and apply it to the stochastic scrabble and the the zero-inflated contamination model. In Section \ref{shortest}, we analyse the behaviour of the shortest distance between observed orbits of a dynamical system and present result \eqref{mr2}. Section \ref{random} deals with the case of random dynamical systems and states result \eqref{mr3}, as well as some applications.

\section{Reaching R\'enyi entropy via string matching of encoded sequences}\label{renyi}

The present section is dedicated to study of the longest common substring of encoded sequences. We start by presenting some terminology and definitions, in order to introduce the problem.

Let $(\Omega, \mathcal{F}, \mathds{P})$ be a probability space, where $\Omega=\chi^{\mathds{N}}$ for some alphabet $\chi$, $\mathcal{F}$ the sigma-algebra generated by the $n$-cylinders in $\Omega$ and $\mathds{P}$ is a stationary probability measure on $\mathcal{F}$. If $\sigma$ is the left shift on $\Omega$, we can see $(\Omega, \mathcal{F}, \mathds{P}, \sigma)$ as a symbolic dynamical system with $\mathds{P}$ $\sigma$-invariant. Let  $\tilde\Omega=\tilde\chi^{\mathds{N}}$ for some alphabet $\tilde\chi$ and $\tilde{\mathcal{F}}$ the sigma-algebra generated by the $n$-cylinders in $\tilde\Omega$.

\begin{Df}\label{longestdist}
Let $f: \Omega \to \tilde\Omega $ be {an encoder}. Given two sequences $x,y \in \Omega,$ we define the $n$-length of the longest common substring for the encoded pair $(f(x),f(y))$ by
$$
M_n^f (x,y)= \displaystyle \max \left\{k \ : f(x)_i^{i+k-1} = f(y)_j^{j+k-1} \ \mbox{for some} \ 0 \leq i,j \leq n-k\right\} \ ,
$$
where $f(x)_i^{i+k-1}$ and $f(y)_j^{j+k-1}$ denote the substrings of length $k$ beginning in $f(x)_i$ and $f(y)_j$ respectively.
\end{Df}

For $y \in \Omega$ (respectively $\tilde\Omega $) we denote by $C_n(y)$ the $n$-cylinder containing $y$, that is, the set of sequences $z \in \Omega$ (respectively $\tilde\Omega $) such that $z_i=y_i$ for any $i=0, \ldots, n-1$. We denote $\mathcal{F}_0^n$ (respectively $\tilde{\mathcal{F}}_0^n$) the sigma-algebra on $\Omega$ (respectively $\tilde\Omega $) generated by all $n$-cylinders.
\begin{Df}
The lower and upper R\'enyi entropies of a measure $\mathds{P}$ are defined as
$$\underline{H}_2(\mathds{P}) = -\displaystyle\underset{k \to \infty}{\underline{\lim}}\frac{1}{k}\log\sum\limits_{C_k} \mathds{P}(C_k)^2 \ \  \mbox{and} \ \ \overline{H}_2(\mathds{P}) = -\displaystyle\underset{k \to \infty}{\overline{\lim}}\frac{1}{k}\log\sum\limits_{C_k} \mathds{P}(C_k)^2 \ ,$$
where the sums are taken over all k-cylinders. When the limit exists we denote by ${H}_2(\mathds{P})$ the common value.
\end{Df}
In general, the existence of the R\'enyi entropy is not known. However, it was computed in some particular cases: Bernoulli shift, Markov chains and Gibbs measure of a H\"older-continuous potential \cite{haydnvaienti}. The existence was also proved for $\phi$-mixing measures \cite{luc}, for weakly $\psi$-mixing processes \cite{haydnvaienti} and for $\psi_g$-regular processes \cite{AbCa}. In section \ref{secrenyi}, we will prove that for Markov chains, the R\'enyi entropy does not depend on the initial distribution but only on the transition matrix and that one can compute the R\'enyi entropy even if the measure is not stationary.

\begin{Df}
Consider the dynamical system $(\Omega, \mathds{P}, \sigma)$. We say that it is $\alpha$-mixing if there exists a function $\alpha: \mathds{N} \to \mathds{R}$ where $\alpha(g)$ converges to zero when $g$ goes to infinity and such that
\begin{equation}\label{alpha}
\sup_{A \in \mathcal{F}_0^n \ ; \  B \in \mathcal{F}_0^m}\lt|\mathds{P}\lt( A \cap \sigma^{-g-n}B\rt) -\mathds{P}(A) \mathds{P}(B)\rt| \leq \alpha(g) \ ,
\end{equation}
for all $m,n \in \mathds{N}$.

We say that the system is $\psi$-mixing if there exists a function $\psi: \mathds{N} \to \mathds{R}$ where $\psi(g)$ converges to zero when $g$ goes to infinity and such that
\begin{equation}\label{psi}
\sup_{A \in \mathcal{F}_0^n \ ; \  B \in \mathcal{F}_0^m}\lt|\frac{\mathds{P}\lt( A \cap \sigma^{-g-n}B\rt) -\mathds{P}(A) \mathds{P}(B)}{\mathds{P}(A) \mathds{P}(B)}\rt| \leq \psi(g),
\end{equation}
for all $m,n \in \mathds{N}$. In the cases that $\alpha(g)$ or $\psi(g)$ decreases exponentially fast to zero, we say that the system has an exponential decay.
\end{Df}

Now we are ready to present the main result of this section. It states that, under suitable conditions and large values of $n$, the longest common substring behaves like $\log n$, for almost all realizations.

\begin{Th} \label{discrete} Consider $f:\Omega \to \tilde\Omega$ {an encoder} such that $\underline{H}_2(f_*\mathds{P})>0$. For $\mathds{P} \otimes \mathds{P}$-almost every $(x,y) \in \Omega \times \Omega$,
\begin{eqnarray}\label{ineq1discretecase}
\displaystyle\underset{n \to \infty}{\overline{\lim}}\frac{M_n^f(x,y)}{\log n} \leq \frac{2}{\underline{H}_2(f_*\mathds{P})} \cdot
\end{eqnarray}
Moreover, if
\begin{description}
\item [(i)] the system $(\Omega, \mathds{P}, \sigma)$ is $\alpha$-mixing with an exponential decay (or $\psi$-mixing with $\psi(g)=g^{-a}$ for some $a>0$);
\item [(ii)] $C_n \in \tilde{\mathcal{F}}_0^n$ implies $f^{-1}C_n \in \mathcal{F}_0^{h(n)},$ where $h(n)= o(n^{\gamma})$, for some $\gamma>0$,
\end{description}
then, for $\mathds{P} \otimes \mathds{P}$-almost every $(x,y) \in \Omega \times \Omega$,
\begin{eqnarray}\label{ineq2discretecase}
\displaystyle\underset{n \to \infty}{\underline{\lim}}\frac{M_n^f(x,y)}{\log n} \geq \frac{2}{\overline{H}_2(f_*\mathds{P})} \cdot
\end{eqnarray}
Therefore, if the R\'enyi entropy exists, we get for $\mathds{P} \otimes \mathds{P}$-almost every $(x,y) \in \Omega \times \Omega$,
\begin{equation}\tag{\footnotesize{$\largestar$}}
\lim_{n \to \infty}\frac{M_n^f(x,y)}{\log n} = \frac{2}{{H}_2(f_*\mathds{P})} \cdot 
\end{equation}
\end{Th}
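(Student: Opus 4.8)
The plan is to establish the two one-sided estimates separately --- the upper one by a first-moment (union) bound that needs only the definitions, the lower one by a second-moment argument using hypotheses (i) and (ii) --- and then to combine them. I shall carry out the argument assuming $H_2(f_*\mathds{P})$ exists and will abbreviate it by $H_2$; the statements \eqref{ineq1discretecase}--\eqref{ineq2discretecase} come out of the same scheme with the clean exponent below replaced by the appropriate $\liminf$/$\limsup$ of $-\tfrac1k\log S_k$. Everything rests on one identity: since $x$ and $y$ are independent, $\mathds{P}$ is $\sigma$-invariant and $f$ intertwines the shifts, every length-$k$ window $f(x)_i^{i+k-1}$ has the law of $f_*\mathds{P}$ on $k$-cylinders, so
\[
\mathds{P}\otimes\mathds{P}\big(f(x)_i^{i+k-1}=f(y)_j^{j+k-1}\big)=\sum_{C_k}(f_*\mathds{P})(C_k)^2=:S_k
\]
for all $i,j,k$, the $C_k$ ranging over the $k$-cylinders of $\tilde\Omega$. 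By definition of the R\'enyi entropy, $-\tfrac1k\log S_k\to H_2>0$; I shall also use the elementary bound $\max_{C_k}(f_*\mathds{P})(C_k)\le S_k^{1/2}$ and the resulting geometric decay $S_{k+1}/S_k\to e^{-H_2}$.

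For the upper bound, a union over positions gives $\mathds{P}\otimes\mathds{P}(M_n^f\ge k)\le n^2 S_k$. Fix $\varepsilon>0$, put $k_n=\lceil(2/H_2+\varepsilon)\log n\rceil$, and work along the geometric subsequence $n_\ell=2^\ell$, where $\mathds{P}\otimes\mathds{P}(M_{n_\ell}^f\ge k_{n_\ell})\le n_\ell^2 S_{k_{n_\ell}}=e^{-c\ell(1+o(1))}$ for some $c>0$. By Borel--Cantelli, $M_{n_\ell}^f<k_{n_\ell}$ for all large $\ell$; since $n\mapsto M_n^f$ is non-decreasing, interpolating between consecutive $n_\ell$ yields $\limsup_n M_n^f/\log n\le 2/H_2+\varepsilon$, and $\varepsilon\downarrow0$ closes this half. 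This part uses neither mixing nor (ii).

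For the lower bound I would adapt the second-moment scheme of \cite{ArWa,BaLiRo}. Fix $\varepsilon>0$, put $k_n=\lfloor(2/H_2-\varepsilon)\log n\rfloor$, so that the expected number of matching length-$k_n$ windows in $\{0,\dots,n-1\}^2$ is $\asymp n^2 S_{k_n}=n^{\varepsilon H_2+o(1)}\to\infty$. Partition $\{0,\dots,n-1\}$ into $m_n\asymp(\log n)^2$ blocks of length $\asymp n/(\log n)^2$ separated by gaps $g_n$ --- of order $\log n$ under exponential $\alpha$-mixing, of order a fixed power of $\log n$ under $\psi$-mixing with $\psi(g)=g^{-a}$. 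Hypothesis (ii) enters precisely here: reading $k_n=O(\log n)$ consecutive encoded symbols requires only $h(k_n)=o((\log n)^\gamma)$ raw coordinates, negligible next to both $g_n$ and the block length, so the windows appearing in $M^f$ lie inside single blocks and are $\ge g_n$ apart in the raw sequence. Inside one block pair, a Paley--Zygmund estimate for $W:=\#\{$matching length-$k_n$ window pairs$\}$ (whose mean still diverges, the blocks having length $n^{1-o(1)}$) gives $\mathds{P}\otimes\mathds{P}(W>0)\to1$: the ``diagonal'' pairs chain into longer matches and contribute $O(\mathbb{E}W)=o((\mathbb{E}W)^2)$ by the geometric decay of $S_k$, while the pairs sharing an $x$- or $y$-coordinate and those with overlapping windows contribute $o((\mathbb{E}W)^2)$ thanks to $\max_{C_k}(f_*\mathds{P})(C_k)\le S_k^{1/2}$ combined with the mixing bound on the remaining well-separated coordinates. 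Finally, mixing decouples the $m_n$ block events up to a negligible correction, so
\[
\mathds{P}\otimes\mathds{P}(M_n^f<k_n)\le\big(\sup_b\mathds{P}\otimes\mathds{P}(\text{block pair }b\text{ contains no length-}k_n\text{ match})\big)^{m_n}+(\text{small mixing error}),
\]
which is summable in $n$ since the supremum is at most $1/2$ for large $n$; Borel--Cantelli and monotonicity give $\liminf_n M_n^f/\log n\ge 2/H_2-\varepsilon$, and $\varepsilon\downarrow0$ finishes. Combining the two halves yields $\lim_n M_n^f/\log n=2/H_2(f_*\mathds{P})$.

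The main obstacle is the second-moment estimate inside a block. Because the encoder couples raw coordinates, one must control the contribution of nearby positions --- the clumping of long matches --- and of position pairs sharing a coordinate; the two facts that make this work are the geometric decay $S_{k+1}/S_k\to e^{-H_2}$, which bounds $\sum_{s\ge1}S_{k_n+s}=O(S_{k_n})$, and $\max_{C_k}(f_*\mathds{P})(C_k)\le S_k^{1/2}$, which bounds triple collisions $\sum_{C_k}(f_*\mathds{P})(C_k)^3\le S_k^{3/2}$. Equally essential is verifying that (ii) with $h(n)=o(n^\gamma)$ keeps $h(k_n)$ polylogarithmic in $n$: were $f^{-1}$ of an $n$-cylinder allowed to depend on a near-linear window of raw coordinates, the block decoupling --- and hence the passage from ``$W>0$ with high probability inside a block'' to an almost-sure statement --- would break down. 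The remaining bookkeeping (the geometric subsequence, the monotone interpolation, and converting the $\psi$- or $\alpha$-mixing estimates into the stated error terms) is routine and essentially as in \cite{BaLiRo}.
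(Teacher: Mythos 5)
Your upper bound is essentially the paper's argument verbatim (first moment, Borel--Cantelli along a geometric subsequence, monotone interpolation), so nothing to add there. For the lower bound you take a genuinely different route: the paper does not block. It applies Chebyshev to the global count $S_n^f=\sum_{i,j\le n}\mathds{1}_{A_{ij}^f}$, splits the variance sum over quadruples $(i,i',j,j')$ according to whether $|i-i'|$ and $|j-j'|$ exceed $g+k_n$ with $g=(\log n)^\beta$, $\beta>\max\{1,\gamma\}$, and settles for the resulting bound $\mathds{P}\otimes\mathds{P}(M_n^f<k_n)=O((\log n)^{-1})$, which it then makes summable by passing to the subsequence $n_\kappa=\lceil e^{\kappa^2}\rceil$, exactly as in the first half. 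Your Arratia--Waterman-style blocking, Paley--Zygmund within a block pair, and iterated decoupling buys a stretched-exponential bound of order $2^{-(\log n)^2}$ and hence summability along the full sequence of integers, at the cost of an extra layer: you must decouple $m_n$ non-product events of the product system $(\Omega\times\Omega,\mathds{P}\otimes\mathds{P})$, which needs the (standard but unstated) fact that the $\alpha$-coefficient of a product of two independent systems between product $\sigma$-algebras is controlled by the sum of the individual coefficients; the paper only ever applies mixing to indicators of single pre-images of cylinders and avoids this issue entirely.

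Two points in your sketch need repair. First, $S_{k+1}/S_k\to e^{-H_2}$ does not follow from $-\frac1k\log S_k\to H_2$ (only the monotonicity $S_{k+1}\le S_k$ is automatic), so the bound $\sum_{s\ge1}S_{k_n+s}=O(S_{k_n})$ that you invoke for the clumping term is unjustified as stated. Fortunately it is also unnecessary: for quadruples with both $|i-i'|\le g$ and $|j-j'|\le g$ the crude bound (probability of the joint event at most $S_{k_n}$, number of such quadruples at most $L^2g^2$) already gives a contribution $L^2g^2S_{k_n}=o\big((\E W)^2\big)$, which is precisely the paper's Case 2.2; quadruples with larger diagonal offset fall into the mixing case. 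Second, your gap $g_n$ of ``order $\log n$'' under exponential $\alpha$-mixing is inconsistent with hypothesis (ii): $h(k_n)=o((\log n)^\gamma)$ may exceed any constant multiple of $\log n$ when $\gamma>1$, and the effective mixing gap between the raw-coordinate windows is $g_n-h(k_n)$, so you must take $g_n$ of order $(\log n)^\beta$ with $\beta>\max\{1,\gamma\}$, as the paper does. With these two corrections your route goes through and in fact yields a sharper probabilistic estimate than the paper's.
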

{\begin{Rmk}We emphasize that to obtain this result one cannot apply directly Theorem 7 of \cite{BaLiRo} since in general the pushforward measure $f_*\mathds{P}$ is not stationary (see e.g. Section \ref{secscrabble}).
\end{Rmk}
}
\begin{proof}
For simplicity we assume $\alpha(g)=e^{-g}.$ The $\psi$-mixing case can be obtained by a simple modification.
The proof of this theorem follows the lines of the proof of the Theorem 7 in \cite{BaLiRo}, {but an extra care is needed (mainly in the second part of the proof) since we are working with pre-image of cylinders (instead of cylinders in \cite{BaLiRo}). }

In the first part of the proof, for $\epsilon >0$ we denote
$$k_n=\left\lceil\frac{2 \log n + \log \log n}{\underline{H_2}(f_*\mathds{P})-\epsilon}\right\rceil.$$
Let us also denote
$$A_{i,j}^f(y)=\sigma^{-i}[f^{-1}C_{k_n}(f(\sigma^jy))]$$
and
$$S_n^f(x,y)= \sum_{i,j= 1, \ldots, n}\mathds{1}_{A_{i,j}^f(y)}(x).$$

We first show that the event $\left\{M_n^f \geq k_n \right\}$ occurs only finitely many times. It follows from definition of $S_n^f$ and Markov's inequality that
$$\mathds{P} \otimes \mathds{P} \lt(\lt\{(x,y): M_n^f(x,y) \geq k_n\rt\}\rt)=\mathds{P} \otimes \mathds{P} \lt(\lt\{(x,y): S_n^f(x,y)\geq1\rt\}\rt)  \leq  \E\lt(S_n^f\rt).$$
Moreover, by computing the expected value of $S_n^f$ we get
\begin{eqnarray*}
\E\lt(S_n^f\rt) & = & \int \int \displaystyle\sum_{i,j=1,\ldots,n} \mathbbm{1}_{A_{ij}^f(y)}(x) \ d\mathds{P}(x) \ d\mathds{P}(y) \\
& = & \displaystyle\sum_{i,j=1,\ldots,n} \int \mathds{P}\left(f^{-1}C_{k_n}(f(\sigma^jy))\right) \ d\mathds{P}(y) \\
& =& n^2 \int f_*\mathds{P}\left(C_{k_n}(f(y))\right) \ d\mathds{P}(y).
\end{eqnarray*}

Thus,
$$\mathds{P} \otimes \mathds{P} \lt(\lt\{(x,y): M_n^f(x,y) \geq k_n\rt\}\rt)  \leq   n^2 \int f_*\mathds{P}\left(C_{k_n}(f(y))\right) \ d\mathds{P}(y).$$
For large values of $n$, by definition of $\underline{H}_2(f_*\mathds{P})$ it holds
$$\int f_*\mathds{P}\left(C_{k_n}(f(y))\right)\ d\mathds{P}(y)= \sum_{C_{k_n}}f_*\mathds{P}\left(C_{k_n}\rt)^2 \leq e^{-k_n(\underline{H}_2(f_*\mathds{P})-\epsilon)}.$$
Moreover by definition of $k_n,$
$$
\mathds{P} \otimes \mathds{P} \lt(\lt\{(x,y): M_n^f(x,y) \geq k_n\rt\}\rt) \leq n^2 e^{-k_n (\underline{H}_2(f_*\mathds{P})-\epsilon)} \leq\frac{1}{\log n} \ .
$$

Choosing a subsequence $\{n_{\kappa}\}_{\kappa \in \mathds{N}}$ such that $n_{\kappa}= \lceil e^{\kappa^2}\rceil$ we have that
$$
\mathds{P} \otimes \mathds{P} \lt(\lt\{(x,y): M_{n_{\kappa}}^f(x,y) \geq k_{n_{\kappa}}\rt\}\rt) \leq \frac{1}{\kappa^2} \ .
$$

Since the last quantity is summable in $\kappa$, the Borel-Cantelli lemma gives that if $\kappa$ is large enough, then for almost every pair $(x,y)$ it holds

$$M_{n_{\kappa}}^f(x,y) < k_{n_{\kappa}}$$
and  then
\begin{eqnarray}\label{desigbc}
\frac{ M_{n_{\kappa}}^f(x,y)}{\log n_{\kappa}} \leq  \frac{1}{\underline{H}_2(f_*\mathds{P})-\epsilon}\lt( 2 + \frac{1+\log \log n_{\kappa}}{\log n_{\kappa}}\rt).
\end{eqnarray}

We observe that for all $n$, there exists $\kappa$ such that $e^{\kappa} \leq n \leq e^{\kappa+1}.$ In addition, since $\lt(M_n^f\rt)_{n \in \mathds{N}}$ is an increasing sequence, we get
\begin{equation}\label{log}
\frac{ M_{n_{\kappa}}^f(x,y)}{ \log n_{\kappa+1}} \leq \frac {M_n^f(x,y)}{ \log n}  \leq \frac {M_{n_{\kappa+1}}^f(x,y)}{ \log n_{\kappa}} \ .
\end{equation}
Taking the limit superior in the above inequalities and observing that $\lim\limits_{\kappa \to \infty} \dfrac{\log n_{\kappa}}{ \log n_{\kappa+1}}=1$ by \eqref{log} we obtain
$$
\underset{n \rightarrow \infty}{\overline{\lim}} \frac{M_n^f(x,y)}{\log n} = \underset{\kappa \rightarrow \infty}{\overline{\lim}} \frac{M_{n_{\kappa}}^f(x,y)}{\log n_{\kappa}}.
$$

Thus, by \eqref{desigbc} we have
$$
\underset{n \rightarrow \infty}{\overline{\lim}} \frac{M_n^f(x,y)}{\log n}  \leq \frac{2}{\underline{H}_{2}(f_*\mathds{P})-\epsilon}.
$$
Since $\epsilon$ can be arbitrarily small, \eqref{ineq1discretecase} is proved.

{Despite some similarities with Theorem 7 in \cite{BaLiRo}, we emphasize that second part of the present proof is quite different, in particular since the length of the encoded sequences may be changed by the encoder.} 

We will now prove \eqref{ineq2discretecase}. In order to do that denote, for $\epsilon>0$,
$$k_n=\left\lfloor\frac{2 \log n + b \log \log n}{\overline{H_2}(f_*\mathds{P})+\epsilon}\right\rfloor$$
where $b$ is a constant to be chosen.

Note that by definition of $S_n^f$ we have
\begin{eqnarray*}
\mathds{P} \otimes \mathds{P} \lt(\lt\{(x,y): M_n^f(x,y) < k_n\rt\}\rt) &= &  \mathds{P} \otimes \mathds{P} \lt(\lt\{(x,y): S_n^f(x,y)=0\rt\}\rt) \\
 &\leq &  \mathds{P} \otimes \mathds{P} \lt(\lt\{(x,y): \lt|S_n^f(x,y)- \E\lt(S_n^f\rt)\rt| \geq \lt|\E\lt(S_n^f\rt)\rt|\rt\}\rt).
\end{eqnarray*}
By Chebyshev's inequality we deduce that 
$$\mathds{P} \otimes \mathds{P} \lt(\lt\{(x,y): M_n^f(x,y) < k_n\rt\}\rt) \leq  \frac{\textrm{var} \lt(S_n^f\rt)}{\E\lt(S_n^f\rt)^2}.$$
We have to estimate the variance of $S_n^f.$

We see at once that
\begin{eqnarray}\label{vardis}
\textrm{var}\lt(S_n^f\rt) &  =  & \sum_{1 \leq i,i',j,j' \leq n} \textrm{\textrm{cov}}\left(\mathbbm{1}_{A_{ij}^f},\mathbbm{1}_{A_{i'j'}^f}\right) \nonumber\\
&=& \sum_{1 \leq i, i', j, j' \leq n} \int \int \mathbbm{1}_{f^{-1}C_{k_n}(f(\sigma^jy))}(\sigma^ix) \mathbbm{1}_{f^{-1}C_{k_n}(f(\sigma^{j'}y))}(\sigma^{i'}x)  \\
& -& n^4 \lt(\sum_{C_{k_n}} f_*\mathds{P}\lt(C_{k_n} \rt)^2 \rt)^2.\nonumber
\end{eqnarray}

Let $g=g(n)= (\log n)^{\beta}$, for some $\beta>\max\{1,\gamma\}$. There are four cases to consider.

Case 1: $\lt|i-i'\rt| > g+ k_n$.\label{case1} Using the $\alpha$-mixing condition we have
\begin{eqnarray}\label{eq1}
& & \int \lt(\int \mathbbm{1}_{f^{-1}\lt(C_{k_n}(f(\sigma^j y))\rt)}(\sigma^{(i-i')}x) \mathbbm{1}_{f^{-1}\lt(C_{k_n}(f(\sigma^{j'} y))\rt)}( x) d\mathds{P}(x) \rt) d\mathds{P}(y) \nonumber \\
& \leq &\alpha(g+k_n-h(k_n)) +\int \lt(\int \mathbbm{1}_{f^{-1}\lt(C_{k_n}(f(\sigma^j y))\rt)}(x) d\mathds{P}(x) \int \mathbbm{1}_{f^{-1}\lt(C_{k_n}(f(\sigma^{j'} y))\rt)}( x) d\mathds{P}(x) \rt) d\mathds{P}(y) \nonumber \\
& =& \alpha(g+k_n-h(k_n))+\int f_*\mathds{P}\lt(C_{k_n}\lt(f\lt(\sigma^j y\rt)\rt)\rt) f_*\mathds{P}\lt(C_{k_n}\lt(f\lt(\sigma^{j'} y\rt)\rt)\rt) d\mathds{P}(y).  \nonumber \\
\end{eqnarray}

To estimate the first term of the sum above we analyse two cases.

Case 1.1: $\lt|j-j'\rt| > g+ k_n$.\label{case11} In this case we have
\begin{eqnarray*}
&&\int f_*\mathds{P}\lt(C_{k_n}\lt(f\lt(\sigma^j y\rt)\rt)\rt) f_*\mathds{P}\lt(C_{k_n}\lt(f\lt(\sigma^{j'} y\rt)\rt)\rt) d\mathds{P}(y) \nonumber \\
&=& \int f_*\mathds{P}\lt(C_{k_n}\lt(f\lt(\sigma^{j-j'} y\rt)\rt)\rt) f_*\mathds{P}\lt(C_{k_n}\lt(f\lt( y\rt)\rt)\rt) d\mathds{P}(y \nonumber)\\
&=&\sum_{C_{k_n}, C_{k_n}^{'}}\int_{f^{-1}\lt(C_{k_n}\rt) \cap \sigma^{j-j'}\lt(f^{-1}\lt(C_{k_n}^{'}\rt)\rt)}f_*\mathds{P}\lt(C_{k_n}\rt)f_* \mathds{P}\lt(C_{k_n}^{'}\rt) d\mathds{P}(y) \nonumber \\
&=& \sum_{C_{k_n}, C_{k_n}^{'}} f_*\mathds{P}\lt(C_{k_n}\rt) f_*\mathds{P}\lt(C_{k_n}^{'}\rt) \mathds{P}\lt(f^{-1}\lt(C_{k_n}\rt) \cap \sigma^{j-j'}\lt(f^{-1}\lt(C_{k_n}^{'}\rt)\rt)\rt).
\end{eqnarray*}

Using the $\alpha$-mixing condition in the last expression we get that
\begin{eqnarray}\label{eq11}
&&\int f_*\mathds{P}\lt(C_{k_n}\lt(f\lt(\sigma^j y\rt)\rt)\rt) f_*\mathds{P}\lt(C_{k_n}\lt(f\lt(\sigma^{j'} y\rt)\rt)\rt) d\mathds{P}(y) \nonumber \\
&\leq &  \sum_{C_{k_n}, C_{k_n}^{'}}f_* \mathds{P}\lt(C_{k_n}\rt) f_*\mathds{P}\lt(C_{k_n}^{'}\rt) \lt( f_*\mathds{P}\lt(C_{k_n}\rt) f_*\mathds{P}\lt(C_{k_n}^{'}\rt)\rt) \nonumber \\
&& + \sum_{C_{k_n}, C_{k_n}^{'}}f_* \mathds{P}\lt(C_{k_n}\rt) f_*\mathds{P}\lt(C_{k_n}^{'}\rt) \lt( \alpha\lt(g +k_n- h(k_n)\rt)\rt) \nonumber \\
& =& \alpha\lt(g +k_n- h(k_n)\rt) + \lt(\sum_{C_{k_n}} f_*\mathds{P}\lt(C_{k_n} \rt)^2 \rt)^2.
\end{eqnarray}

Case 1.2\label{case12} $\lt|j-j'\rt| \leq g+ k_n$. By H\"older's inequality it follows that,

\begin{eqnarray}\label{eq12}
&&\int f_*\mathds{P}\lt(C_{k_n}\lt(f\lt(\sigma^j y\rt)\rt)\rt) f_*\mathds{P}\lt(C_{k_n}\lt(f\lt(\sigma^{j'} y\rt)\rt)\rt) d\mathds{P}(y)  \nonumber\\
& \leq & \lt( \int f_*\mathds{P}\lt(C_{k_n}\lt(f\lt(\sigma^j y\rt)\rt)\rt)^2 d\mathds{P}(y)\rt)^{1/2}\lt( \int f_*\mathds{P}\lt(C_{k_n}\lt(f\lt(\sigma^{j'} y\rt)\rt)\rt)^2 d\mathds{P}(y)\rt)^{1/2}  \nonumber\\
&=& \sum_{C_{k_n}} f_*\mathds{P}\lt(C_{k_n}\rt)^3\nonumber\\
& \leq & \lt( \sum_{C_{k_n}}f_*\mathds{P}\lt(C_{k_n}\rt)^2 \rt)^{3/2}.
\end{eqnarray}
where the last inequality comes from the subadditivity of the function $z(x)=x^{2/3}$.

Case 2. $\lt|i-i'\rt| \leq g+ k_n$: \label{case2} \\

Case 2.1 $\lt|j-j'\rt| > g+ k_n$: \\

By symmetry, this case is analogous to the case 1.2. \\ 

Case 2.2. $\lt|j-j'\rt| \leq g+ k_n$:\label{case22}
\begin{eqnarray}\label{eq22}
&&\int \int \mathbbm{1}_{f^{-1}\lt(C_{k_n}(f(\sigma^j y))\rt)}(\sigma^ix) \mathbbm{1}_{f^{-1}\lt(C_{k_n}(f(\sigma^{j'} y))\rt)}(\sigma^{i'}x) d\mathds{P}(x) d\mathds{P}(y)   \nonumber  \\
&\leq & \int \int \mathbbm{1}_{f^{-1}\lt(C_{k_n}(f(\sigma^j y))\rt)}(\sigma^ix) d\mathds{P}(x) d\mathds{P}(y)  \nonumber  \\
&=& \sum_{C_{k_n}} f_*\mathds{P}\lt(C_{k_n}\rt)^2 .
\end{eqnarray}

Putting the estimates \eqref{eq11},\eqref{eq12}, \eqref{eq22}  together in \eqref{vardis} we get
\begin{eqnarray}\label{pxp}
\frac{\textrm{var}(S_n^f)}{\E(S_n^f)^2} &\leq & \frac{ 3n^4\alpha\lt(g +k_n- h(k_n)\rt) + 4n^3(g+k_n)\lt(\sum\limits_{C_{k_n}}f_*\mathds{P}\lt(C_{k_n}\rt)^2 \rt)^{3/2}}{\lt(n^2 \sum\limits_{C_{k_n}}  f_*\mathds{P}\lt(C_{k_n}\rt)^2\rt)^2} \nonumber \\
&+& \frac{ 4n^2(g+k_n)^2 \sum\limits_{C_{k_n}}  f_*\mathds{P}\lt(C_{k_n}\rt)^2}{\lt(n^2 \sum\limits_{C_{k_n}}  f_*\mathds{P}\lt(C_{k_n}\rt)^2\rt)^2}.
\end{eqnarray}

We estimate each term on the right separately. Using the definition of $k_n$ and of the R\'enyi entropy, for $n$ large enough, we have for the first term

\begin{eqnarray*}
\frac{ 3n^4\alpha\lt(g +k_n- h(k_n)\rt)}{\lt(n^2 \sum\limits_{C_{k_n}}  f_*\mathds{P}\lt(C_{k_n}\rt)^2\rt)^2}
& \leq & \frac{3n^4\alpha\lt(g +k_n- h(k_n)\rt)}{(\log n)^{-2b}}.
\end{eqnarray*}

By hypothesis, $h(k_n)=o((\log n)^{\gamma})$. Therefore, by definition of $g$ and $k_n$, for n large enough we have $g +k_n- h(k_n)>\log(n^4)$. Hence,
\begin{eqnarray}\label{term1s}
\frac{ 3n^4\alpha\lt(g +k_n- h(k_n)\rt)}{\lt(n^2 \sum\limits_{C_{k_n}}  f_*\mathds{P}\lt(C_{k_n}\rt)^2\rt)^2} & \leq& 3(\log n)^{2b}.
\end{eqnarray}

To estimate the second term we obtain

\begin{eqnarray}\label{term2s}
&&\frac{ 4n^3(g+k_n)\lt(\sum\limits_{C_{k_n}}f_*\mathds{P}\lt(C_{k_n}\rt)^2 \rt)^{3/2}}{\lt(n^2 \sum\limits_{C_{k_n}}  f_*\mathds{P}\lt(C_{k_n}\rt)^2\rt)^2} \nonumber \\
& \leq &  4(g+k_n)(\log n)^{b/2} \nonumber \\
& \leq & 4(\log n)^{\beta+ b/2} + \frac{2 (\log n)^{1+b/2} + b \log (\log n)(\log n)^{b/2}}{\overline{H}_2(f_*\mathds{P})+ \epsilon}.
\end{eqnarray}

Finally for the third term we get
\begin{eqnarray}\label{term3s}
&&\frac{ 4n^2(g+k_n)^2 \sum\limits_{C_{k_n}} f_*\mathds{P}\lt(C_{k_n}\rt)^2}{\lt(n^2 \sum\limits_{C_{k_n}}  f_*\mathds{P}\lt(C_{k_n}\rt)^2\rt)^2} \nonumber \\
&\leq & 4(g+k_n)^2 (\log n)^b \nonumber \\
& \leq & 8(\log n)^{2\beta+ b} + \frac{4 (\log n)^{2+b} + b^2 (\log (\log n))^2(\log n)^{b}}{(\overline{H}_2(f_*\mathds{P})+ \epsilon)^2}.
\end{eqnarray}

Taking $b <-4\beta$ and substituting \eqref{term1s}, \eqref{term2s} and \eqref{term3s} into \eqref{pxp}, we obtain
\begin{eqnarray}
\mathds{P} \otimes \mathds{P} \lt(\lt\{ (x,y): M_n^f(x,y) < k_n\rt\}\rt) \leq \mathcal{O}\lt((\log n)^{-1}\rt).
\end{eqnarray}
Thus, taking a subsequence $\{n_{\kappa}\}_{\kappa}= \lceil e^{\kappa^2}\rceil$ as in the proof of \eqref{ineq1discretecase}, we can use Borel-Cantelli Lemma to obtain \eqref{ineq2discretecase}.

Finally, if the R\'enyi entropy exists, by \eqref{ineq1discretecase} and \eqref{ineq2discretecase} we conclude the proof of the theorem.
\end{proof}




In what follows we compute the R\'enyi entropy for Markov chains and then we apply the above stated theorem to some well-known cases of probability's literature. The first one is a contamination {encoder} that flips to zero some symbols of the sequence and the second one gives a weight on each symbol of $\chi$. 

\subsection{R\'enyi entropy of Markov chains}\label{secrenyi}

In the sequel we present an entropy invariance statement by change of initial distribution. In particular, we will use this result in the example of the stochastic scrabble (Section \ref{secscrabble}) to compute the entropy of the pushforward measure.
\begin{Th}\label{entropy}
Let $(X_n)_{n \in \mathds{N}}$ be a Markov chain in a finite alphabet $\chi$, with irreducible and aperiodic transition matrix $P=[(p_{ij})]$ and stationary measure $\mu$. For any Markov measure $\nu$ with initial distribution $\pi$ and transition matrix $P$ it holds
$$
H_2(\nu) = H_2(\mu)=-\log p 
$$
where $p$ is the largest eigenvalue of the matrix $[(p_{ij})^2]$.
\end{Th}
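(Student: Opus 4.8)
The plan is to compute the R\'enyi entropy directly from the definition $H_2(\nu) = -\lim_{k\to\infty}\frac{1}{k}\log\sum_{C_k}\nu(C_k)^2$ and show that the initial distribution $\pi$ only affects a bounded multiplicative factor, so it disappears in the limit. Write a $k$-cylinder as $[a_0 a_1 \cdots a_{k-1}]$, so that $\nu([a_0\cdots a_{k-1}]) = \pi_{a_0} p_{a_0 a_1} p_{a_1 a_2}\cdots p_{a_{k-2}a_{k-1}}$. Squaring and summing over all cylinders gives
\begin{equation*}
\sum_{C_k}\nu(C_k)^2 = \sum_{a_0,\ldots,a_{k-1}} \pi_{a_0}^2\, p_{a_0a_1}^2\, p_{a_1a_2}^2 \cdots p_{a_{k-2}a_{k-1}}^2 .
\end{equation*}
Introducing the nonnegative matrix $Q = [q_{ij}]$ with $q_{ij} = p_{ij}^2$ and the vector $v$ with $v_i = \pi_i^2$, this sum is exactly $v^{\mathsf T} Q^{k-1} \mathbf{1}$, where $\mathbf 1$ is the all-ones vector. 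The same computation with $\pi = \mu$ gives $w^{\mathsf T}Q^{k-1}\mathbf 1$ with $w_i = \mu_i^2$.

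Next I would invoke the Perron--Frobenius theorem. Since $P$ is irreducible and aperiodic, so is $Q = [p_{ij}^2]$ (same zero pattern), hence $Q$ is primitive and has a simple dominant eigenvalue $p>0$ with strictly positive left eigenvector $u$ and right eigenvector $r$, normalized so $u^{\mathsf T} r = 1$. Then $Q^{k-1} = p^{k-1}(r u^{\mathsf T} + E_k)$ where $\|E_k\|$ decays geometrically. Consequently, for any fixed vectors $v$ (with $v\neq 0$, which holds since $\pi$ is a probability vector) we get $v^{\mathsf T}Q^{k-1}\mathbf 1 = p^{k-1}\big((v^{\mathsf T}r)(u^{\mathsf T}\mathbf 1) + o(1)\big)$, and crucially $v^{\mathsf T}r > 0$ because $r$ has strictly positive entries and $v$ is nonnegative and nonzero. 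Taking logarithms,
\begin{equation*}
-\frac{1}{k}\log\big(v^{\mathsf T}Q^{k-1}\mathbf 1\big) = -\frac{k-1}{k}\log p - \frac{1}{k}\log\big((v^{\mathsf T}r)(u^{\mathsf T}\mathbf 1) + o(1)\big) \xrightarrow[k\to\infty]{} -\log p .
\end{equation*}
This limit is $-\log p$ regardless of whether we used $v$ (from $\pi$) or $w$ (from $\mu$), which proves simultaneously that $H_2(\nu)$ exists, equals $H_2(\mu)$, and equals $-\log p$ with $p$ the largest eigenvalue of $[p_{ij}^2]$.

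I expect the only delicate point to be justifying that the initial-distribution factor $v^{\mathsf T}r$ is bounded away from $0$ and $\infty$ uniformly, i.e. that it genuinely contributes nothing to the $-\frac1k\log$ limit; this is where primitivity of $Q$ and positivity of the Perron eigenvector $r$ are essential, and one should note that $v\neq 0$ simply because a probability vector cannot be identically zero. A minor bookkeeping issue is the off-by-one in the exponent ($Q^{k-1}$ versus $Q^k$), which is irrelevant after dividing by $k$. Everything else is the standard spectral decomposition of a primitive matrix, so no real obstacle remains; alternatively one could phrase the argument via $\lim_k \frac1k \log \|Q^k\| = \log p$ together with two-sided bounds $c_1 \|Q^{k-1}\|_\infty \le v^{\mathsf T}Q^{k-1}\mathbf 1 \le c_2\|Q^{k-1}\|_\infty$ coming from strict positivity of entries of a power of $Q$, avoiding explicit eigenvectors altogether.
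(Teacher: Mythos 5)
Your proof is correct, but it takes a genuinely different route from the paper. You compute $\sum_{C_k}\nu(C_k)^2$ exactly as the bilinear form $v^{\mathsf T}Q^{k-1}\mathbf 1$ with $Q=[p_{ij}^2]$ and $v_i=\pi_i^2$, observe that $Q$ inherits primitivity from $P$ (same zero pattern), and read off the exponential growth rate $p$ from the Perron--Frobenius decomposition; the positivity of the right Perron eigenvector guarantees $v^{\mathsf T}r>0$ for any probability vector $\pi$, so the initial distribution washes out under $-\frac1k\log$. The paper instead treats the identity $H_2(\mu)=-\log p$ as known (citing Haydn--Vaienti) and concentrates on proving $H_2(\nu)=H_2(\mu)$ by a convergence-to-equilibrium argument: it uses the coupling estimate $\sup_{x_k}|\nu(X_k=x_k)-\mu(x_k)|\le\gamma^k$ from Ferrari--Galves to sandwich $\nu(X_0^n=x_0^n)$ between constant multiples of $\mu(x_{a_n}^n)$ for a slowly growing cutoff $a_n=o(n)$, discards the first $a_n$ coordinates at a cost of $\frac{a_n}{n}\log|\chi|\to 0$, and invokes stationarity of $\mu$ to conclude. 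Your spectral argument is more self-contained --- it simultaneously establishes existence of the limit, the value $-\log p$, and independence of $\pi$ in one computation --- whereas the paper's coupling argument is more robust in spirit (it does not need an explicit matrix representation of the squared cylinder measures and would adapt to settings where only mixing-type estimates are available). The two delicate points you flag (nonvanishing of $v^{\mathsf T}r$ and the off-by-one in the power of $Q$) are exactly the right ones and are handled correctly; no gap remains.
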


\begin{proof} First of all, we observe that the second equality is a well-known result (see e.g. \cite{haydnvaienti} Section 2.2). For the first equality, we will show {the following two inequalities
\begin{equation}\label{ineqhsup}
\overline{H}_2(\nu) \leq H_2(\mu) 
\end{equation}
and
\begin{equation}\label{ineqhinf}  \underline{H}_2(\nu) \geq H_2(\mu).
\end{equation}
}

For convenience here, we will adopt the following notation for strings of stochastic processes: $\{X_n^m=x_n^m\} = \{X_n=x_n,X_{n+1}=x_{n+1}, \cdots,X_m=x_m\} $ for every non-negative integers $n,m$ such that $n\leq m$ and for any realization $x=x_0^{\infty}$.

We will use corollary (3.13) from \cite{FeGa}, which states that there exists $\gamma \in (0,1)$ such that for all $k>1$
$$
\sup_{x_k\in\chi} |\nu(X_k=x_k)- \mu(x_k)| \leq \gamma^k \ .
$$

A straightforward computation gives that for every $n>k>1$ 
$$
\sup_{x_0,x_k\in\chi} |\nu(X_k=x_k\mid X_0=x_0)- \mu(x_k)| \leq \gamma^k $$
and for every $x^n_k\in\chi^{n-k+1}$
$$ |\nu(X^n_k=x^n_k)- \mu(x^n_k)| \leq c\gamma^k \mu(x^n_k) 
$$
with $c=({\inf_{x_0}\{\mu(x_0)\}})^{-1}<+\infty$. 

Let $(a_n)_{n\in \mathds{N}}$ be a non-decreasing and unbounded sequence in $n$ taking values on the non-negative integers and such that $n \geq a_n = o(n)$. Without loss of generality we will only consider the strings $x_0^n$ such that $\nu(X_0^n=x_0^n)>0$. On the one hand, we get
\begin{eqnarray*}
\nu (X_0^n = x_0^n  ) & \leq & \nu (X_{{a_n}}^n=x_{{a_n}}^n  ) \\
                                   & \leq &\left[c\gamma^{a_n}\mu(x_{a_n}^n)+ \mu(x_{a_n}^n)  \right]. \\
                 \end{eqnarray*}

Therefore
\begin{eqnarray*}
\frac{1}{n} \log \sum_{x_0^n} \nu(X_0^n=x_0^n)^2 & \leq & \frac{2}{n} \log (c\gamma^{a_n}+1) + \frac{1}{n} \log \sum_{x_0^n} \mu(x_{a_n}^n)^2 \\
                                                 & = & \frac{2}{n} \log (c\gamma^{a_n}+1) + \frac{1}{n} \log \sum_{x_0^{a_n-1}}\sum_{x_{a_n}^{n}} \mu(x_{a_n}^n)^2 \\
                                                 & \leq & \frac{2}{n} \log (c\gamma^{a_n}+1) + \frac{1}{n} \log |\chi|^{a_n} + \frac{1}{n}\log\sum_{x_{a_n}^{n}} \mu(x_{a_n}^n)^2.
                                                  \end{eqnarray*}

One can observe that the two first terms in the last line vanish as $n\rightarrow\infty$. Moreover, by stationarity of $\mu$ we obtain 
\[\overline{\lim}_{n\rightarrow\infty}\frac{1}{n} \log \sum_{x_0^n} \nu(X_0^n=x_0^n)^2  \leq \overline\lim_{n\rightarrow\infty}\frac{1}{n}\log\sum_{x_{a_n}^{n}} \mu(x_{a_n}^n)^2=\lim_{n\rightarrow\infty}\frac{1}{n-a_n}\log\sum_{x_{0}^{n-a_n}} \mu(x_{0}^{n-a_n})^2=H_2(\mu)\]
which gives us \eqref{ineqhsup}. 

On the other hand, first notice that for strings such that $\nu(X_0^n=x_0^n)>0$, we have for $n$ large enough
\begin{eqnarray*}
\nu(X_0^n=x_0^n) & = & \pi(x_0)P_{x_0x_1}\cdots P_{x_{a_n-1}x_{a_n}}P_{x_{a_n}x_{a_n+1}}\cdots P_{x_{n-1}x_n} \\
                   & \geq & \pi(x_0) \rho^{a_n} \frac{1}{\nu(X_{a_n}=x_{a_n})}\nu(X_{a_n}^n = x_{a_n}^n)   \\    
                   & \geq &  \frac{\pi(x_0) \rho^{a_n}}{\mu(x_{a_n})+\gamma^{a_n}}   \left[\mu(x_{a_n}^n)(1-\gamma^{a_n})\right]    \\
                   & \geq &  d\rho^{a_n}   \left[\mu(x_{a_n}^n)(1-\gamma^{a_n})\right]    
\end{eqnarray*}
where $\rho := \displaystyle\min_{P_{ij}>0}P_{ij}$ and $d=\displaystyle\frac{1}{2}\min_{\pi(x_0)>0}\pi(x_0)$.

Now
\begin{eqnarray*}
\frac{1}{n}\log \sum_{x_0^n}\nu(X_0^n=x_0^n)^2 & \geq & \frac{2}{n}\log \left(d \rho^{a_n}\right)+ \frac{1}{n}\log\sum_{x_0^n}\left[\mu(x_{a_n}^n)(1-\gamma^{a_n})\right]^2 \\
& \geq & \frac{2}{n}\log \left(d \rho^{a_n}\right)+ \frac{2}{n}\log(1-\gamma^{a_n})+\frac{1}{n}\log\sum_{x_{a_n}^n}\left[\mu(x_{a_n}^n)\right]^2.
\end{eqnarray*}
Taking the limit inferior and observing that the first two terms in the last line vanish and the third one converges to $H_2(\mu)$ as $n$ diverges, we obtain \eqref{ineqhinf}. This last statement concludes the proof.

\end{proof}

\subsection{The zero-inflated contamination model}

Let $\{\xi_i\}_{i \in \mathds{N}}$ be a sequence of i.i.d. random variables taking values on $\{0,1\}$, independently of $\mathds{P}$, and governed by a Bernoulli measure such that $\mu(\xi_i = 1) = 1-\epsilon$, where $\epsilon \in (0,1)$ is the noise parameter. Let $f_{\xi}: \Omega \to\Omega$ be a perturbation given by $f_{\xi}(z)=\{\xi_i z_i\}_{i \in \mathds{N}}$. This defines the zero inflated contamination model (see \cite{CoGaLe,GaMo}). 

Then, if $\underline{H}_2(f_{\xi}*\mathds{P})>0$, for $\mathds{P} \otimes \mathds{P}$-almost every $(z,t) \in \Omega \times \Omega$,
\begin{eqnarray*}
\displaystyle\underset{n \to \infty}{\overline{\lim}}\frac{M_n^{f_{\xi}}(z,t)}{\log n} \leq \frac{2}{\underline{H}_2(f_{\xi}*\mathds{P})}.
\end{eqnarray*}

Moreover, if the system $(\Omega, \mathds{P}, \sigma)$ is $\alpha$-mixing with an exponential decay, for $\mathds{P} \otimes \mathds{P}$-almost every $(z,t) \in \Omega \times \Omega$,
\begin{eqnarray*}
\displaystyle\underset{n \to \infty}{\underline{\lim}}\frac{M_n^{f_{\xi}}(z,t)}{\log n} \geq \frac{2}{\overline{H}_2(f_{\xi}*\mathds{P})}.
\end{eqnarray*}
Indeed, for $k$ large enough $f_{\xi}^{-1}C_k \in \mathcal{F}_0^{m_\epsilon(k)},$ where $ m_{\epsilon}(k)$ is the proportion of $1$'s in the $k$-cylinder $C_k(\xi)$. Let  $\mu^{\otimes \mathds{N}}$ denote the product measure that governs the stochastic process $\{\xi_i\}_{i \in \mathds{N}}$. One can observe that by the law of large numbers $\mu^{\otimes\mathds{N}}$-almost every realization $\xi$ has an $\epsilon$-proportion of zeros, i.e.
$$\lim_{k \to \infty}\frac{m_\epsilon(k)}{k}=1-\epsilon.$$
Thus, for $\mu^{\otimes\mathds{N}}$-almost every $\xi$, there exists $\epsilon_1>0$ such that $m_\epsilon(k)=o(k^{1+\epsilon
_1})$ and thus one can apply Theorem \ref{discrete}.

Moreover, if $\mathds{P}$ is a Bernoulli measure we can explicitly compute the R\'enyi entropy of ${f_{\xi}}*\mathds{P}$. Namely, by using the binomial theorem,  for $k$ large enough we get
\begin{eqnarray*}
\sum_{C_k}[\mathds{P}(f_{\xi}^{-1}C_k)]^2 &  = & \sum_{j=1}^{m_{\epsilon}(k)} {{m_{\epsilon}(k)}\choose{j}} p^{2j}(1-p)^{2(m_{\epsilon}(k)-j)} \\
                          &  = & \left[p^2+(1-p)^2\right]^{m_{\epsilon(k)}}.
\end{eqnarray*}

Therefore the R\'enyi entropy is given by
\begin{eqnarray*}
{H}_2(f_{\xi}*\mathds{P})  & = & - \lim_{k \to \infty}\frac{m_\epsilon(k)}{k}\log(p^2+(1-p)^2) \\
      & = & -(1-\epsilon)\log\lt(p^2+(1-p)^2\rt) \ .
\end{eqnarray*}
We observe that if $\chi = \{a_1,\ldots,a_n\}$ is a finite alphabet and $\mathds{P}(X=a_i)=p_i$, by similar computations (and the multinomial theorem) we obtain
$$
{H}_2(f_{\xi}*\mathds{P})= -(1-\epsilon)\log \lt(\sum_{i} p_i^2\rt) = (1-\epsilon) {H}_2(\mathds{P}) \ .
$$

{
Thus, by Theorem \ref{discrete}, we get that for $\mu^{\mathbb{N}}$-almost every realization of $\{\xi_i\}_{i \in \mathbb{N}}$ it holds
}
$$
\frac{M_n^{f_{\xi}}}{\log n}\underset{n\rightarrow\infty}{\longrightarrow}\frac{2}{(1-\epsilon) {H}_2(\mathds{P})} \ \ \ \ \ \mathds{P} \otimes \mathds{P} \ - \ \mbox{a.s.}
$$

The case $f_{\xi}=Id$ is equivalent to $\epsilon=0$ (no contamination), and if $\epsilon$ is close to $1$ we expect to observe larger values for $M_n^{f_{\xi}}$ (in view of Theorem \ref{discrete}). This can be summarized with the following assertion:  the more contamination, the more coincidences appear between the encoded strings. This is a rather intuitive feature of the string matching problem, which indicates that sequences which had lost much information tends to present more similarity.

\subsection{Highest-scoring matching substring}\label{secscrabble}

In this example we will consider the case in which a shorter match can be better scored than a long one, depending on the symbols that compose the matched strings. For this we assume that each string is scored according to the symbols that compose it. In this sense suppose that each letter $a\in \chi$ is associated to a weight $v(a) \in \mathds{N}^*$. We also denote the score of a string $z_0^{m-1}$ by $V(z_0^{m-1}) =\sum_{j=0}^{m-1} v(z_j)$. If $x$ and $y$ are two realizations of the $\chi$-valued stochastic processes  
$(X_n)_n$ and $(Y_n)_n$,

$$V_n(x,y) = \max\limits_{0 \leq i,j \leq n-m}\lt\{V(z_0^{m-1}): \mbox{there exists} \ 1 \leq m \leq n \ \mbox{such that} \ z_0^{m-1} = x_i^{i+m-1}=y_j^{j+m-1} \rt\}$$
is the $n^{th}$ highest-scoring matching substring \cite{ArMoWa}. The authors also named it stochastic Scrabble, because of the namesake board game. 
For two copies independently generated by the same Markov source $\mathds{P}$ with positive transition probabilities $\lt[p_{ij}\rt]$, they stated the following result: 

\begin{equation}\label{scrabble}
\lim_{n \to \infty} \frac{V_n}{\log n}=\frac{2}{-\log p} \ \ \ \ \ \ \ \ \  \mathds{P} \times \mathds{P} -\mbox{a.s.} \ ,
\end{equation}
where $p \in (0,1)$ is the largest root of $\det(P - \lambda^V)=0$, with $P=\lt[p^2_{ij}\rt]$ and $\lambda^V= \lt[\delta_{ij}\lambda^{v(i)}\rt]$.

One can observe that this result \eqref{scrabble} can be obtained as particular case of Theorem \ref{discrete}. Indeed, inspired by \cite{ArMoWa}, we can construct a specific {encoder} $f$ that stretches the sequences depending on the weights of its letters. Formally
\begin{align}
  f \  \colon & \  \chi^{\mathds{N}}\to \chi^{\mathds{N}} \nonumber \\ \label{old}
 & x_0^{\infty} \mapsto \underbrace{x_0x_0\cdots x_0}_{v(x_0)} \underbrace{x_1x_1\cdots x_1}_{v(x_1)} \cdots \underbrace{x_nx_n\cdots x_n}_{v(x_n)} \cdots
\end{align}

With this particular {encoder}, we get that $M_n^f(x,y)=V_n(x,y)$ and thus to get \eqref{scrabble} we need to compute $H_2(f_*\mathds{P})$ and check that conditions (i) and (ii) of Theorem \ref{discrete} are satisfied. 

We recall that if $(X_n)$ is a Markov chain in $\chi = \{1,2,\cdots,d\}$, we can see $f(X_n)$ as a Markov Chain in $\tilde{\chi}$, which is a $(\sum_{i \in \chi}v(i))$-sized alphabet, given by
$$
\tilde\chi = \left\{1_1,1_2,\cdots,1_{v(1)},2_1,2_2,\cdots,2_{v(2)}, \cdots, d_1, d_2,\cdots,d_{v(d)}\right\} \ .
$$
 In this context, we will consider that $f:\chi^{\mathds{N}} \to \tilde{\chi}^{\mathds{N}}$.
Furthermore, if $Q = [Q_{ij}]$, $1\leq i,j \leq d$ is the transition matrix for $(X_n)$ we get that the transition matrix $Q^*$ for the chain $(f(X_n))$ on $\tilde\chi$ is given by
$$
\begin{array}{lll}
Q_{i_\ell i_{\ell +1}}^* = 1 &  \mbox{if} & 1\leq \ell \leq v(i)-1  \ \mbox{and} \ 1\leq i,j \leq d \ ; \\
Q_{i_{v(i)} j_{1}}^* = Q_{ij} & \mbox{if}  &  1\leq i,j \leq d \ ; \\
Q_{ij}^*=0 & & \mbox{otherwise}.
\end{array}
$$

Notice that, if $v_{min}=\min_{i \in \chi}\{v(i)\}$ is the minimum weight, we get for any cylinder $C_n$,
$$
f^{-1}C_n \in \mathcal{F}_0^{\left\lfloor \frac{n}{v_{min}}\right\rfloor} \ ,
$$
and since $n/v_{min}=o(n^{1+\epsilon})$ for all $\epsilon > 0$, condition (ii) of Theorem \ref{discrete} is then satisfied.
We recall that an irreducible and aperiodic positive recurrent Markov chain is an $\alpha$-mixing process with exponential decay of correlation (see e.g. Theorem 4.9 in \cite{LevinPeresWilmer}) which implies condition (i).

Finally, to obtain \eqref{scrabble}, we need to compute $H_2(f_*\mathds{P})$. As in \cite{ArMoWa}, to assure aperiodicity for the encoded process $f(X_n)$ we assume that $gdc\{v(1), v(2), \ldots, v(d)\}=1$.

Moreover, by Theorem \eqref{entropy} we know that the R\'enyi entropy of its stationary measure $\mu$ is given by $H_2(\mu)=-\log p$, where $p$ is the largest positive eigenvalue of the matrix $\lt[(Q^*)^2_{ij}\rt]$, $1\leq i,j \leq (\sum_{i \in \chi}v(i))$ (it was proved in  \cite{ArMoWa} that this $p$ is the same as the one defined in \eqref{scrabble}). Moreover, we observe that $f_*\mathds{P}$ is a Markov measure with initial distribution $\pi$ and transition matrix $Q^*$, where $\pi$ is defined by
$\pi(i_1)=\mathds{P}(X_0=i)$ and $\pi(i_j)=0$ for any $i\in \chi$ and $1<j\leq v(i)$. It is important to notice that in general, $f_*\mathds{P}$ is not stationary.

Thus, by Theorem \eqref{entropy}, we have $H_2(\mu)=H_2(f_*\mathds{P})$ and we can combine it with equation ({\footnotesize{$\largestar$}}) in Theorem \ref{discrete} to conclude that, for $\mathds{P} \times \mathds{P}$ almost every pair of realizations, as $n$ diverges it holds 
$$
\frac{V_n}{\log n} \longrightarrow\frac{2}{-\log p} \ .
$$
We remark that this example generalizes \cite{ArMoWa} to $\alpha$-mixing processes with exponential decay and $\psi$-mixing with polynomial decay, since we can apply Theorem \ref{discrete} to this {encoder} $f$, and then obtain information on the highest scoring $V_n$.

\section{Shortest distance between observed orbits}\label{shortest}

In \cite{BaLiRo} it was explained that, in the case of dynamical systems, investigating the longest common substring is similar to the study of the shortest distance between orbits. Mixing this idea with the fact that studying statistical properties of observations of dynamical systems could be more significant for experimentalists (see e.g \cite{Bosh,RS,Rousseau}), we will analyse in this section the behaviour of the shortest distance between two observed orbits.

Let $(X, \mathcal{A}, T, \mu)$ be a dynamical system  where $(X,d)$ is a metric space, $\mathcal{A}$ is a $\sigma$-algebra on $X$, $T:X\rightarrow X$ is a
measurable map and $\mu$ an invariant probability measure on $(X, \mathcal{A})$ i.e., $\mu(T^{-1}(A))=\mu(A),$ for all $A \in \mathcal{A}.$

\begin{Df}\label{shortestdist}
Let $f:X\rightarrow Y \subset \mathds{R}^N$ be a measurable function, called the observation. We define the shortest distance between two observed orbits as follows
$$m_n^f(x,y) = \min_{i,j=0,\ldots,n-1}\left(d(f(T^ix),f(T^jy))\right).$$
\end{Df}

For a measure $\nu$ on $X$ we define \textit{the lower and upper correlation dimension} of $\nu$ by
$$\underline{C}_{\nu} = \underset{r \rightarrow 0}{\underline{\lim}}\dfrac{\log \int_X \nu(B(x,r))\ d\nu(x)}{\log r} \ \ \mbox{and} \ \ \overline{C}_{\nu} =  \underset{r \rightarrow 0}{\overline{\lim}}\dfrac{\log \int_X \nu(B(x,r))\ d\nu(x)}{\log r}.$$
If the limit exists, we denote by $C_\nu$ the common value.

We will show that the shortest distance between two observed orbits is related with the correlation dimension of the pushforward measure $f_*\mu$. Recall that the pushforward measure is given by $f_*\mu(\cdot):= \mu(f^{-1}(\cdot))$.

\begin{Th}\label{firstresult}Let $(X, \mathcal{A}, T, \mu)$ be a dynamical system. Consider an observation $f:X \to Y$ such that $\underline{C}_{f_*\mu}>0$. Then for $\mu \otimes \mu$-almost every $(x,y) \in X \times X$
\begin{equation}\label{ineqsd}
\underset{n \rightarrow \infty}{\overline{\lim}}\frac{\log m_n^f(x,y)}{-\log n} \leq \frac{2}{\underline{C}_{f_*\mu}} \ .
\end{equation}
\end{Th}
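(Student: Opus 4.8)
The plan is to mimic the first half of the proof of Theorem \ref{discrete}, replacing cylinders by metric balls and the R\'enyi entropy by the correlation dimension. Fix $\epsilon>0$ and set $r_n = n^{-\frac{2}{\underline{C}_{f_*\mu}-\epsilon}}$ (up to a logarithmic correction in the exponent, to make the bound summable along a subsequence). The event to control is $\{m_n^f(x,y) < r_n\}$, which means there exist $0\le i,j\le n-1$ with $d(f(T^ix),f(T^jy))<r_n$, i.e. $f(T^ix)\in B(f(T^jy),r_n)$. Introduce the counting variable
$$
S_n^f(x,y) = \sum_{i,j=0}^{n-1} \mathbf{1}_{\{d(f(T^ix),f(T^jy))<r_n\}},
$$
so that $\{m_n^f<r_n\}\subseteq\{S_n^f\ge 1\}$, and apply Markov's inequality: $\mu\otimes\mu(m_n^f<r_n)\le \mathbf{1}\cdot\E(S_n^f)$.

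The key computation is the expectation. Using invariance of $\mu$ under $T$ (applied in both variables) one gets
$$
\E(S_n^f) = n^2 \int_X \int_X \mathbf{1}_{\{d(f(x),f(y))<r_n\}}\, d\mu(x)\,d\mu(y) = n^2 \int_Y f_*\mu\big(B(z,r_n)\big)\, d(f_*\mu)(z).
$$
By the definition of $\underline{C}_{f_*\mu}$, for any $\epsilon>0$ and all $n$ large enough, $\int_Y f_*\mu(B(z,r_n))\,d(f_*\mu)(z) \le r_n^{\underline{C}_{f_*\mu}-\epsilon}$. Substituting the choice of $r_n$ (with the logarithmic correction) makes $\E(S_n^f)$ decay like a negative power of $\log n$, hence summable along a subsequence $n_\kappa=\lceil e^{\kappa^2}\rceil$ exactly as in Theorem \ref{discrete}. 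Borel--Cantelli then gives $m_{n_\kappa}^f(x,y)\ge r_{n_\kappa}$ eventually, for $\mu\otimes\mu$-a.e.\ $(x,y)$.

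Finally I would pass from the subsequence to the full sequence. Here one uses that $n\mapsto m_n^f(x,y)$ is non-increasing, so that for $n_\kappa\le n\le n_{\kappa+1}$ one can sandwich $\log m_n^f/(-\log n)$ between ratios involving $m_{n_\kappa}^f$, $m_{n_{\kappa+1}}^f$, $\log n_\kappa$, $\log n_{\kappa+1}$, together with $\lim_\kappa \log n_\kappa/\log n_{\kappa+1}=1$; this yields $\overline{\lim}_n \log m_n^f/(-\log n)=\overline{\lim}_\kappa \log m_{n_\kappa}^f/(-\log n_\kappa)\le 2/(\underline{C}_{f_*\mu}-\epsilon)$, and letting $\epsilon\to 0$ finishes the proof. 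I expect no serious obstacle here: this direction needs only the first-moment method and requires no mixing hypothesis (the lower bound on the liminf, by contrast, would need mixing and a variance estimate, as in the second half of Theorem \ref{discrete}, but that is not part of this statement). The only mild technical point is choosing the logarithmic correction in $r_n$ so that the Borel--Cantelli series converges while not affecting the limiting constant.
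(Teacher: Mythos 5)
Your proposal is correct and follows essentially the same route as the paper's proof: the same first-moment/Markov argument with $r_n=e^{-k_n}$ where $k_n=(2\log n+\log\log n)/(\underline{C}_{f_*\mu}-\epsilon)$, the same expectation computation via invariance giving $n^2\int f_*\mu(B(f(y),e^{-k_n}))\,d\mu(y)\le 1/\log n$, and the same Borel--Cantelli step along $n_\kappa=\lceil e^{\kappa^2}\rceil$ followed by the monotonicity sandwich and $\epsilon\to 0$. No gaps.
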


We recall that the condition $\underline{C}_{f_*\mu} = 0$ can lead to unknown values for the above limit. However, one can observe that if $m_n^f=0$ on a set of positive measure, our result implies immediately that $\underline{C}_{f_*\mu} = 0$. The following simple example illustrates this fact. 
\begin{Ex}
Let $X=[0,1]$ and  $\mu=Leb$ the Lebesgue measure on $X$. Given $ A \subset X$ with $\mu(A)>0$ we define a function $f:X \to X$ by
$$f(x)=\left\{\begin{array}{ll}
x,\text{ if } x\in A^c \\
c, \text{ if } x \in A \\
\end{array}\right.$$
where $c\in[0,1]$ is a constant. 
For any transformation $T$ which is $\mu$-invariant, we have $m_n^f(x,y)=0$ for every $x,y\in A$, and thus $\underline{C}_{f_*\mu}=0$. {One can also observe that if T is ergodic, for $n$ sufficiently large $m_n^f(x,y)=0$ for almost every $x,y$. Indeed, by Poincar\'e recurrence Theorem, we obtain that, for almost every $x,y$, the orbits of $x$ and $y$ will visit $A$, i.e. it exist $n_1,n_2\in \N$ such that $T^{n_1}(x)\in A$ and $T^{n_2}(y)\in A$. Therefore, for $n$ sufficiently large $m_n^f(x,y)=d(f(T^{n_1}(x)), f(T^{n_2}(y)))=0$.}
In fact, with a simple computation, one can show that ${C}_{f_*\mu}=0$.
\end{Ex}

\begin{proof}[Proof of Theorem \ref{firstresult}]
For $\epsilon>0$ we define
$$k_n = \frac{2\log n + \log \log n}{\underline{C}_{f_*\mu}-\epsilon}.$$
We also define
$$A_{ij}^f(y) = T^{-i}\left[f^{-1}B\lt(f(T^jy), e^{-k_n}\rt)\right] $$
and
$$S_n^f (x,y)= \displaystyle\sum_{i,j=1,\ldots,n} \mathbbm{1}_{A_{ij}^f(y)}(x).$$

Using Markov inequality, we get that
$$\mu \otimes \mu \lt(\lt\{(x,y): m_n^f(x,y) < e^{-k_n}\rt\}\rt)=\mu \otimes \mu \lt(\lt\{(x,y): S_n^f(x,y)>0\rt\}\rt)  \leq  \E\lt(S_n^f\rt).$$
Using the invariance of $\mu$, we can compute the expected value of $S_n^f$ 
\begin{eqnarray*}
\E\lt(S_n^f\rt) 
                                                 & =& n^2 \int f_*\mu\left(B\lt(f(y), e^{-k_n}\rt)\right) \ d\mu(y).
\end{eqnarray*}
Thus, for $n$ large enough, by definition of $\underline{C}_{f_*\mu}$ and $k_n$, we obtain
$$
\mu \otimes \mu \lt(\lt\{(x,y): m_n^f(x,y) \leq e^{-k_n}\rt\}\rt) \leq n^2 e^{-k_n (\underline{C}_{f_*\mu}-\epsilon)} = \frac{1}{\log n} \ .
$$

Choosing a subsequence $\{n_{\kappa}\}_{\kappa \in \mathds{N}}$ such that $n_{\kappa}= \lceil e^{\kappa^2}\rceil$, we can use Borel-Cantelli Lemma as in the proof of \eqref{ineq1discretecase} to obtain
$$
\underset{n \rightarrow \infty}{\overline{\lim}} \frac{\log m_n^f(x,y)}{-\log n} \leq \frac{2}{\underline{C}_{f_*\mu}-\epsilon}.
$$

Since $\epsilon$ can be arbitrarily small, the proof is complete.
\end{proof}

As in \cite{BaLiRo}, to obtain an equality in \eqref{ineqsd}, we will need more assumptions on the system.

\begin{description}
\item [(H1)] Let $\mathcal{H}^\alpha(X,\mathds{R})$  be the space of H\"older observables. For all $\psi, \phi \in \mathcal{H}^\alpha(X,\mathds{R})$ and for all $n \in \mathds{N}^{*},$ we have:
$$\lt|\int_X \psi \circ f(T^n x)\phi\circ f(x) \ d\mu(x) - \int_X \psi\circ f \ d\mu \int_X \phi \circ f \ d\mu \rt| \leq \|\psi \circ f\|_{\alpha} \|\phi \circ f\|_{\alpha} \theta_n$$
with $\theta_n=a^{n}$ and $a\in[0,1)$.

\item [(HA)] There exist $r_0>0, \ \xi \geq 0$ and $\beta>0$ such that for $f_*\mu$-almost every $y \in \mathds{R}^N$ and any $r_0>r> \rho>0,$
$$f_{\ast}\mu(B(y,r+\rho)\backslash B(y,r-\rho)) \leq r^{-\xi}\rho^{\beta}.$$
\end{description}

One can observe that, if $f$ is Lipschitz, assuming hypothesis (H1) is weaker than assuming a exponential decay of correlations (for H\"older observables) for the system $(X, \mathcal{A}, T, \mu)$. Indeed, note that if $f$ is Lipschitz then $\psi \circ f$ is H\"older for every H\"older function $\psi$.

\begin{Th}\label{secondresult}Let $(X, \mathcal{A}, T, \mu)$ be a dynamical system and consider a Lipschitz observation $f:X \to Y$ such that $\underline{C}_{f_*\mu}>0$. If the system satisfies (H1) and (HA) then for $\mu \otimes \mu$-almost every $(x,y) \in X \times X$

$$
\underset{n \rightarrow \infty}{\underline{\lim}}\frac{\log m_n^f(x,y)}{-\log n} \geq \frac{2}{\overline{C}_{f_*\mu}} \ .
$$

Furthermore, if ${C}_{f_*\mu}$ exists, we get

\begin{equation}\tag{\footnotesize{$\largestar \largestar$}}
\lim_{n \to \infty}\frac{\log m_n^f(x,y)}{-\log n} = \frac{2}{C_{f_*\mu}}
\end{equation}
 for $\mu \otimes \mu$-almost every $(x,y) \in X \times X$.
\end{Th}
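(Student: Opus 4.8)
The plan is to mirror the second part of the proof of Theorem \ref{discrete}, replacing cylinders by the pulled-back balls $A_{ij}^f(y)=T^{-i}[f^{-1}B(f(T^jy),e^{-k_n})]$, and using hypotheses (H1) and (HA) to play the roles that $\alpha$-mixing and the combinatorial structure of cylinders played in the symbolic case. Concretely, for $\epsilon>0$ I would set
$$k_n=\frac{2\log n + b\log\log n}{\overline{C}_{f_*\mu}+\epsilon}$$
for a constant $b$ to be fixed at the end, and $S_n^f$ as in the proof of Theorem \ref{firstresult}. Just as before, Chebyshev's inequality gives
$$\mu\otimes\mu\lt(\lt\{(x,y):m_n^f(x,y)\geq e^{-k_n}\rt\}\rt)=\mu\otimes\mu\lt(\lt\{(x,y):S_n^f(x,y)=0\rt\}\rt)\leq\frac{\textrm{var}(S_n^f)}{\E(S_n^f)^2},$$
so that everything reduces to an upper bound on $\textrm{var}(S_n^f)=\sum_{1\leq i,i',j,j'\leq n}\textrm{cov}(\mathbbm{1}_{A_{ij}^f},\mathbbm{1}_{A_{i'j'}^f})$. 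I would split the quadruple sum into the same four regimes according to whether $|i-i'|$ and $|j-j'|$ exceed $g+k_n$ or not, with $g=g(n)=(\log n)^\beta$ for a suitable $\beta>1$.

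The core of the argument is the far-apart case $|i-i'|>g+k_n$. Here the indicator $\mathbbm{1}_{f^{-1}B(f(T^jy),e^{-k_n})}$ is not Hölder, so (H1) cannot be applied directly; the key device — exactly as in \cite{BaLiRo} — is to sandwich the ball $B(y,e^{-k_n})$ between Lipschitz approximations $\phi^-\leq\mathbbm{1}_{B(y,e^{-k_n})}\leq\phi^+$ supported in annuli of width $\rho$ around the sphere of radius $e^{-k_n}$, with Hölder norms of order $\rho^{-1}$. Because $f$ is Lipschitz, $\phi^\pm\circ f$ is Hölder (this is precisely where the Lipschitz hypothesis on $f$ is used), so (H1) applies to $\phi^\pm\circ f$ and produces a decorrelation term $\|\phi^\pm\circ f\|_\alpha^2\,\theta_{|i-i'|-k_n}\lesssim\rho^{-2}a^{g}$. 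The price of the approximation is the measure of the annulus, controlled by (HA) as $f_*\mu(B(y,e^{-k_n}+\rho)\setminus B(y,e^{-k_n}-\rho))\leq e^{k_n\xi}\rho^\beta$. Choosing $\rho=\rho(n)$ (say a suitable negative power of $n$) to balance $\rho^{-2}a^{g}$ against $e^{k_n\xi}\rho^\beta$ and against the main term $\int f_*\mu(B(f(T^jy),e^{-k_n}))\,d\mu$, one gets, after integrating over $y$ and handling the sub-case $|j-j'|>g+k_n$ by a second application of (H1) (again via the annulus approximation in the $y$-variable) and the sub-case $|j-j'|\leq g+k_n$ by Cauchy–Schwarz as in \eqref{eq12}, a bound of the form
$$\textrm{cov}(\mathbbm{1}_{A_{ij}^f},\mathbbm{1}_{A_{i'j'}^f})\lesssim \text{(small decorrelation term)}+\lt(\int f_*\mu(B(f(y),e^{-k_n}))\,d\mu(y)\rt)^{3/2}.$$
The three near-apart regimes are bounded by the measure of a single ball, i.e. by $\int f_*\mu(B(f(y),e^{-k_n}))\,d\mu(y)$, exactly as in \eqref{eq22}. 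Summing over the four regimes and dividing by $\E(S_n^f)^2=n^4(\int f_*\mu(B(f(y),e^{-k_n}))d\mu)^2$, then using the definition of $\overline{C}_{f_*\mu}$ to say $\int f_*\mu(B(f(y),e^{-k_n}))\,d\mu(y)\geq e^{-k_n(\overline{C}_{f_*\mu}+\epsilon)}\geq(\log n)^{-b}n^{-2}$ for large $n$, yields $\textrm{var}(S_n^f)/\E(S_n^f)^2=\mathcal{O}((\log n)^{-1})$ once $b$ is taken sufficiently negative (so that the various positive powers of $\log n$ coming from $g+k_n$ factors are beaten).

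Finally, as in the proof of \eqref{ineq2discretecase}, I would take the subsequence $n_\kappa=\lceil e^{\kappa^2}\rceil$, apply Borel–Cantelli to conclude $m_{n_\kappa}^f\geq e^{-k_{n_\kappa}}$ eventually almost surely, then interpolate between consecutive $n_\kappa$ using monotonicity of $n\mapsto m_n^f$ (note $m_n^f$ is non-increasing, so $\log m_n^f$ is non-increasing, the opposite monotonicity to $M_n^f$, but the sandwich $\log n_\kappa/\log n_{\kappa+1}\to1$ argument goes through verbatim with the inequalities oriented appropriately) to get $\underline{\lim}_{n\to\infty}\frac{\log m_n^f(x,y)}{-\log n}\geq\frac{2}{\overline{C}_{f_*\mu}+\epsilon}$; letting $\epsilon\to0$ gives the lower bound, and combining with Theorem \ref{firstresult} gives the equality when $C_{f_*\mu}$ exists. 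The main obstacle is the far-apart case: one must carefully coordinate the three scales $e^{-k_n}$ (ball radius), $\rho(n)$ (annulus width / approximation error), and $g(n)$ (decorrelation gap) so that the Lipschitz-approximation error from (HA), the decorrelation error from (H1), and the combinatorial factors $(g+k_n)$ all stay negligible against the main term simultaneously — this is exactly the delicate bookkeeping that the Lipschitz assumption on $f$ and the regularity assumption (HA) are designed to make possible, and it is where the proof genuinely differs from (and is more involved than) the symbolic case, where the analogue of (HA) is trivial and no approximation is needed.
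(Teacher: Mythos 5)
Your outline reproduces the paper's argument almost step for step: the same $k_n$, the same Chebyshev reduction to $\textrm{var}(S_n^f)/\E(S_n^f)^2$, the same four-regime split of the quadruple sum, the Lipschitz approximation of the ball indicator so that (H1) can be applied to $\varphi\circ f$ (you correctly identify this as the place where the Lipschitz hypothesis on $f$ is used), the (HA)-controlled annulus error with $\rho$ a negative power of $n$, and the same Borel--Cantelli/interpolation ending along $n_\kappa=\lceil e^{\kappa^2}\rceil$. The cosmetic differences (a two-sided sandwich where a single upper approximation suffices, $g=(\log n)^\beta$ in place of the paper's $g=\gamma\log n$) are harmless.

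There is, however, one genuine gap. In the regime where exactly one of $|i-i'|$, $|j-j'|$ is large, you claim that ``Cauchy--Schwarz as in \eqref{eq12}'' yields the bound $\bigl(\int f_*\mu(B(f(y),e^{-k_n}))\,d\mu(y)\bigr)^{3/2}$. Cauchy--Schwarz only takes you as far as $\int f_*\mu(B(f(y),e^{-k_n}))^2\,d\mu(y)$. In the symbolic case the further passage from $\sum_{C}f_*\P(C)^3$ to $\bigl(\sum_{C}f_*\P(C)^2\bigr)^{3/2}$ rested on the subadditivity of $z\mapsto z^{2/3}$ summed over the countable partition into cylinders; balls do not form a partition, so that argument does not transfer and the step as you describe it fails. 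The paper closes this with a separate geometric ingredient, Lemma 14 of \cite{BaLiRo}, a covering-type estimate valid for measures on subsets of $\mathbb{R}^N$ giving $\int_Z\nu(B(y,r))^2\,d\nu(y)\le K\bigl(\int_Z\nu(B(y,r))\,d\nu(y)\bigr)^{3/2}$, applied with $\nu=f_*\mu$ on $Y\subset\mathbb{R}^N$; this is also where the hypothesis that $Y$ sits in Euclidean space is actually used. A smaller omission of the same flavour: in the sub-case where both gaps are large, the second application of (H1) is to the function $x\mapsto f_*\mu(B(x,r))$, and its H\"older regularity with norm $O(r^{-\zeta})$ is not automatic --- it must itself be derived from (HA) (Lemma \ref{HA} in the paper); your phrase ``again via the annulus approximation in the $y$-variable'' gestures at the right mechanism but leaves this statement unproved.
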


In what follows one will observe that the proof follows the lines of the symbolic case where $M_n^f$ will be substitute by $-\log m_n^f$ and cylinders of size $k$ will be substitute by balls of radius $e^{-k}$. Thus, we will only write the main lines of the proof, giving more details when the proof diverge from the symbolic one.

To prove Theorem \ref{secondresult}, the main difficulty and difference with the symbolic case is that we cannot apply mixing as simply. In particular, we can only apply mixing to H\"older observables and indicator functions are not even continuous. To overthrow this difficulty, we will first prove in the following lemma that a particular function is H\"older. In the proof of Theorem \ref{secondresult}, we will apply the mixing property to this particular function.

\begin{Lem}\label{HA} Let $(X, \mathcal{A}, T, \mu)$ be a dynamical system with observation $f$. If it satisfies $(HA)$, then there exist $0< r_0<1, \ c \geq 0$ and $\zeta \geq 0$ such that for any $0<r<r_0$, the function $\psi_1:  x \mapsto f_*\mu (B(x,r))$ belongs to $\mathcal{H}^{\alpha}(X,\mathds{R})$ and
$$
||\psi_1||_{\alpha} \leq 2r^{-\zeta} \ .
$$
\end{Lem}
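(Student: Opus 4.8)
The plan is to show that $\psi_1(x) = f_*\mu(B(x,r))$ is Hölder with the claimed norm bound, using hypothesis (HA) to control how much the ball $B(x,r)$ changes as $x$ moves. First I would recall that $\|\psi_1\|_\alpha = \|\psi_1\|_\infty + \sup_{x\neq y} |\psi_1(x)-\psi_1(y)|/d(x,y)^\alpha$. The sup-norm bound is trivial: $\psi_1(x) = f_*\mu(B(x,r)) \le 1 \le r^{-\zeta}$ for $r<1$ and $\zeta \ge 0$. So the entire content is the Hölder seminorm estimate, and for that I would fix $x, y \in X$ and compare $B(f(x),r)$ with $B(f(y),r)$ inside $\mathds{R}^N$ (note $\psi_1$ as written should really be $x \mapsto f_*\mu(B(f(x),r))$, or equivalently one works with the pushforward on $\mathds{R}^N$ and composes with the Lipschitz map $f$; I would make this precise at the start).

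The key geometric observation is that if $\rho = d(f(x),f(y))$, then by the triangle inequality $B(f(x),r) \subseteq B(f(y), r+\rho)$ and $B(f(y),r) \subseteq B(f(x), r+\rho)$, so the symmetric difference $B(f(x),r) \,\triangle\, B(f(y),r)$ is contained in the annulus $B(f(y), r+\rho) \setminus B(f(y), r-\rho)$ (and similarly centered at $f(x)$). Hence
$$
|\psi_1(x) - \psi_1(y)| \le f_*\mu\bigl(B(f(y), r+\rho)\setminus B(f(y), r-\rho)\bigr) \le r^{-\xi}\rho^{\beta},
$$
by (HA), valid as long as $r < r_0$ and $\rho < r$ (the case $\rho \ge r$ is handled trivially since then $|\psi_1(x)-\psi_1(y)| \le 1 \le \rho^\beta/\rho^\beta$ type bounds, or one simply notes $d(x,y)$ is bounded below away from the regime of interest — I would treat the large-$\rho$ case separately using $|\psi_1(x)-\psi_1(y)|\le 1$ and $\rho \le \mathrm{diam}$). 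Now since $f$ is Lipschitz, say with constant $L$, we have $\rho = d(f(x),f(y)) \le L\, d(x,y)$, so $|\psi_1(x)-\psi_1(y)| \le r^{-\xi} L^\beta d(x,y)^\beta$.

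To land exactly at the exponent $\alpha$ appearing in $\mathcal{H}^\alpha$, I would simply choose $\alpha = \min(\beta, 1)$ (or work with the $\alpha$ that $\mathcal{H}^\alpha$ fixes and absorb the discrepancy, using that $X$ is bounded so $d(x,y)^\beta \le (\mathrm{diam}\, X)^{\beta-\alpha} d(x,y)^\alpha$ when $\beta \ge \alpha$). Collecting constants, the Hölder seminorm is bounded by $C r^{-\xi}$ for a constant $C$ depending on $L$, $\beta$, $\alpha$ and $\mathrm{diam}\,X$ but not on $r$; combined with the sup-norm bound this gives $\|\psi_1\|_\alpha \le r^{-\zeta}(1 + C) \le 2 r^{-\zeta}$ after taking $\zeta$ large enough (e.g. $\zeta \ge \xi$ and $r_0$ small enough that $r^{-\zeta} \ge (1+C)/2 \cdot r^{-\xi}$ — here one shrinks $r_0$ to absorb $C$). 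I do not expect any serious obstacle: the only mild subtlety is bookkeeping the different radius regimes ($\rho < r$ versus $\rho \ge r$) and matching the Hölder exponent, which is why the statement allows $r_0<1$, an arbitrary constant $c$, and an unspecified $\zeta$ — there is enough slack to absorb every constant that appears.
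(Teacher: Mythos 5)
Your proposal is correct and follows essentially the same route as the paper: both arguments contain the symmetric difference of the two balls in an annulus via the triangle inequality, apply (HA) when the centers are within distance $r$, and use the trivial bound (Lipschitz with constant $2/r$) when they are farther apart, yielding $\|\psi_1\|_{\alpha}\leq 2r^{-\zeta}$ with $\zeta=\max\{1,\xi\}$. Your extra bookkeeping on the composition with the Lipschitz map $f$ and on matching the H\"older exponent is a reasonable clarification of details the paper leaves implicit, but it does not change the substance of the argument.
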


\begin{proof}
Let $x,y \in X$ and $0<r<r_0$, if $||x-y||<r$ we have
$$
||f_*\mu (B(y,r))-f_*\mu (B(x,r))|| \leq f_*\mu (B(x,r+||x-y||)) - f_*\mu (B(x,r-||x-y||)).
$$
Thus, by (HA), 
$$
||f_*\mu (B(y,r))-f_*\mu (B(x,r))|| \leq r^{-\xi}||x-y||^\beta.
$$
On the other hand, if $||x-y|| \geq r$ then
$$
||f_*\mu (B(y,r))-f_*\mu (B(x,r))|| \leq 2 \leq \frac{2}{r} ||x-y|| \ .
$$
Thus, $\psi_1$ is H\"older and $||\psi_1||_{\alpha} \leq 2r^{-\zeta}$ with $\zeta=\max\{1,\xi\}$.

\end{proof}

In the sequel, we present the proof of Theorem \ref{secondresult}. This proof mainly follows the ideas of the proof of \cite[Theorem 5]{BaLiRo}.

\begin{proof}[Proof of Theorem \ref{secondresult}.]
Without loss of generality, we will assume here that $\theta_n=e^{-n}$. Let $b<-4$. Given $\epsilon >0$, we define
$$k_n = \frac{2 \log n+ b \log\log n}{\overline{C}_{f_*\mu}+\epsilon}.$$

By Chebyshev's inequality we get that 
$$\mu \otimes \mu \lt(\lt\{(x,y): m_n^f(x,y) \geq e^{-k_n}\rt\}\rt) \leq  \frac{\textrm{var} \lt(S_n^f\rt)}{\E\lt(S_n^f\rt)^2}.$$
We now proceed to estimate the variance of $S_n^f.$

We see at once that
\begin{eqnarray}\label{var}
\textrm{var}\lt(S_n^f\rt) &  =  & \sum_{1 \leq i,i',j,j' \leq n} \textrm{\textrm{cov}}\left(\mathbbm{1}_{A_{ij}^f},\mathbbm{1}_{A_{i'j'}^f}\right) \nonumber\\
         &=& \sum_{1 \leq i, i', j, j' \leq n} \int \int \mathbbm{1}_{f^{-1}B\lt(f(T^jy),e^{-k_n}\rt)}(T^ix) \mathbbm{1}_{f^{-1}B\lt(f(T^{j'}y),e^{-k_n}\rt)}(T^{i'}x) \nonumber \\
& & -n^4 \lt(\int f_{\ast}\mu\lt(B\lt(f(y),e^{-k_n}\rt)\rt)\ d\mu(y)\rt)^2.
\end{eqnarray}
{One can observe that this equation is similar to \eqref{vardis}, thus as in the symbolic case we would like to apply the mixing property to estimate the previous sum. However, in this case our assumption (H1) only allows us to use mixing with H\"older functions
thus we will approximate our characteristic functions by Lipschitz (and thus H\"older) functions following the construction of the proof of Lemma 9 in \cite{Saussol1}.}

Let $\rho>0$ (to de defined properly later). Let $\eta_{e^{-k_n}}:[0, \infty) \to \mathds{R}$ be the $\frac{1}{{\rho e^{-k_n}}}$-Lipschitz function such that $\mathbbm{1}_{[0,{e^{-k_n}}]} \leq \eta_{e^{-k_n}} \leq \mathbbm{1}_{[0,(1+\rho){e^{-k_n}}]}$ and set $\varphi_{f(y),{e^{-k_n}}}(x)= \eta_{e^{-k_n}}(d(f(y),x))$. Since $f$ is $L$-Lipschitz it follows that $\varphi_{f(y),{e^{-k_n}}} \circ f$ is $\frac{L}{{\rho e^{-k_n}}}$-Lipschitz. Moreover, we have
\begin{eqnarray}\label{eqapprox}
\mathbbm{1}_{f^{-1}B\lt(f(T^jy),e^{-k_n}\rt)}(x) &=& \mathbbm{1}_{B\lt(f(T^jy),e^{-k_n}\rt)}(f(x)) \nonumber\\
&=& \mathbbm{1}_{[0,e^{-k_n}]}(d(f(T^jy),f(x)))\nonumber \\
& \leq & \eta_{e^{-k_n}}(d(f(T^jy),f(x)))\nonumber \\
&=&\varphi_{f(T^jy),e^{-k_n}}(f(x)).
\end{eqnarray}

We are now able to apply the mixing property and as in the symbolic case, we will consider four different cases. Let us fix $g=g(n)=\log(n^\gamma)$ for some $\gamma>0$ to be defined later.

Case 1: $|i-i'| > g$. By (H1) and \eqref{eqapprox} we obtain
\begin{eqnarray*}
&&\int \int \mathds{1}_{f^{-1}B\lt(f(T^jy),e^{-k_n}\rt)}(T^{i}x) \mathds{1}_{f^{-1}B\lt(f(T^{j'}y),e^{-k_n}\rt)}(T^{i'}x) \ d\mu(x) \ d\mu(y) \\
&=& \int \int \mathds{1}_{f^{-1}B\lt(f(T^jy),e^{-k_n}\rt)}(T^{i-i'}x) \mathds{1}_{f^{-1}B\lt(f(T^{j'}y),e^{-k_n}\rt)}(x) \ d\mu(x) \ d\mu(y) \\
&\leq &\int \int \varphi_{f(T^jy),e^{-k_n}}(f(T^{i-i'}x))\varphi_{f(T^{j'}y),e^{-k_n}}(f(x)) \ d\mu(x)\ d\mu(y) \\
& \leq & \int  \lt(\int \varphi_{f(T^jy),e^{-k_n}}(f(T^{i-i'}x))\  d\mu(x) \int\varphi_{f(T^{j'}y),e^{-k_n}}(f(x)) \ d\mu(x)  \rt)\ d\mu(y) \\
& +& \theta_g \lt\Vert\varphi_{f(T^jy),e^{-k_n}}\rt\Vert \lt\Vert \varphi_{f(T^{j'}y),e^{-k_n}} \rt\Vert \\
& \leq & \frac{L^2}{\rho^2 e^{-2k_n}}\theta_g + \int f_*\mu\lt(B\lt(f(T^jy),(1+\rho)e^{-k_n}\rt)\rt) f_*\mu\lt(B\lt(f(T^{j'}y),(1+\rho)e^{-k_n}\rt)\rt) \ d\mu(y).
\end{eqnarray*}
{This estimate is similar to \eqref{eq1}, however one needs to take extra care to estimate the second part (and not use mixing immediately as in the discrete case) since the radius of the balls is not $e^{-k_n}$ anymore. Indeed, we need the radius to be $e^{-k_n}$ so that when we will use mixing again we will obtain a term which will simplify with the last term in \eqref{vardis}.}
To do so, we can observe that using (HA) we obtain
\begin{eqnarray*}
&&\int f_*\mu\lt(B\lt(f(T^jy),(1+\rho)e^{-k_n}\rt)\rt) f_*\mu\lt(B\lt(f(T^{j'}y),(1+\rho)e^{-k_n}\rt)\rt) \ d\mu(y) \\
& &- \int f_*\mu\lt(B\lt(f(T^jy),e^{-k_n}\rt)\rt) f_*\mu\lt(B\lt(f(T^{j'}y),e^{-k_n}\rt)\rt) \ d\mu(y) \\
& \leq & \int f_*\mu\lt(B\lt(f(T^jy),(1+\rho)e^{-k_n}\rt)\rt)\lt( f_*\mu\lt(B\lt(f(T^{j'}y),(1+\rho)e^{-k_n}\rt)\rt)-f_*\mu\lt(B\lt(f(T^{j'}y),e^{-k_n}\rt)\rt) \rt) \ d\mu(y) \\
& &+ \int f_*\mu\lt(B\lt(f(T^{j'}y),e^{-k_n}\rt)\rt)\lt( f_*\mu\lt(B\lt(f(T^jy),(1+\rho)e^{-k_n}\rt)\rt)- f_*\mu\lt(B\lt(f(T^{j}y),e^{-k_n}\rt)\rt)\rt) \ d\mu(y) \\
& \leq & \int f_*\mu\lt(B\lt(f(T^jy),(1+\rho)e^{-k_n}\rt)\rt)e^{\xi k_n}\rho^{\beta} \ d\mu(y) +  \int f_*\mu\lt(B\lt(f(T^{j'}y),e^{-k_n}\rt)\rt)e^{\xi k_n}\rho^{\beta} \ d\mu(y).
\end{eqnarray*}
Therefore, choosing $\rho=n^{-\delta}$ for some $\delta>0$ to be defined later, we have for $n$ large enough

\begin{eqnarray*}
&&\int \int \mathds{1}_{f^{-1}B\lt(f(T^jy),e^{-k_n}\rt)}(T^{i}x) \mathds{1}_{f^{-1}B\lt(f(T^{j'}y),e^{-k_n}\rt)}(T^{i'}x) \ d\mu(x) \ d\mu(y) \\
& \leq & \frac{L^2}{\rho^2 e^{-2k_n}}\theta_g+ 2e^{\xi k_n}\rho^{\beta} \int f_*\mu\lt(B\lt(f(T^jy),2e^{-k_n}\rt)\rt) \ d\mu(y) \\
& &+ \int f_*\mu\lt(B\lt(f(T^jy),e^{-k_n}\rt)\rt) f_*\mu\lt(B\lt(f(T^{j'}y),e^{-k_n}\rt)\rt) \ d\mu(y) .\\
\end{eqnarray*} 
{One can observe that in this estimate we have now an additional term that was not present in the symbolic setting \eqref{eq1} which is due to the need to approximate characteristic functions by Lipschitz functions.}
To deal with the third term of the last inequality we need to consider two different cases.

Case 1.1: $\vert j- j^{'}\vert>g.$ We can use the mixing property (H1) to the particular function defined in Lemma \ref{HA}
\begin{eqnarray*}
& & \int  f_*\mu\lt(B\lt(f(T^jy),e^{-k_n}\rt)\rt)  f_*\mu\lt(B\lt(f(T^{j'}y),e^{-k_n}\rt)\rt) \ d\mu(y) \\
&\leq& 4\theta_g e^{2\zeta k_n}+\left(\int  f_*\mu\lt(B\lt(f(y),e^{-k_n}\rt)\rt)\ d\mu(y)\right)^2 \\
\end{eqnarray*}
and we obtain an estimate similar to \eqref{eq11}.

Case 1.2: $\vert j- j^{'}\vert \leq g.$ Using Holder's inequality together and the invariance of $\mu$, as in \eqref{eq12}, we have
\begin{eqnarray*}
& & \int  f_*\mu\lt(B\lt(f(T^jy),e^{-k_n}\rt)\rt)  f_*\mu\lt(B\lt(f(T^{j'}y),e^{-k_n}\rt)\rt) \ d\mu(y) \\
&\leq& \int  f_*\mu\lt(B\lt(f(y),e^{-k_n}\rt)\rt)^2 \ d\mu(y).
\end{eqnarray*}

Case 2.1: $|i-i'| \leq  g$ and $\vert j- j^{'}\vert>g.$ In this case, we obtain the same estimate as in the case 1.2 using the following symmetry: 
$$
\mathds{1}_{f^{-1}B\lt(f(T^{\ell}y),e^{-k_n}\rt)}(T^{m}x) = \mathds{1}_{f^{-1}B\lt(f(T^mx),e^{-k_n}\rt)}(T^{\ell}y)
$$
for all $\ell, m \in \mathds{N}$ and all $x$ and $y$.

Case 2.2: $|i-i'| \leq  g$ and $\vert j- j^{'}\vert \leq g.$ The boundedness of the indicator function and invariance of $\mu$ give that,
\begin{eqnarray*}
& & \int \int \mathds{1}_{f^{-1}B\lt(f(T^jy),e^{-k_n}\rt)}(T^{i}x) \mathds{1}_{f^{-1}B\lt(f(T^{j'}y),e^{-k_n}\rt)}(T^{i'}x) \ d\mu(x) \ d\mu(y)  \\
& \leq & \int \int \mathds{1}_{f^{-1}B\lt(f(T^jy),e^{-k_n}\rt)}(T^{i}x) \ d\mu(x) \ d\mu(y)
=
 \int f_*\mu \lt(B\lt(f(y),e^{-k_n}\rt)\rt) \ d\mu(y).
\end{eqnarray*}

Putting all the previous estimates in \eqref{var} we obtain

\begin{eqnarray}\label{var/exp}
\frac{\textrm{var}\lt(S_n^f\rt)}{\mathbb{E}\lt(S_n^f\rt)^2} & \leq & \frac{n^4L^2\rho^{-2}e^{2 k_n}\theta_g +4n^4\theta_ge^{\zeta k_n}+ 2n^4e^{ \xi k_n}\rho^{\beta} \int f_*\mu\lt(B\lt(f(y),2e^{-k_n}\rt)\rt) \ d\mu(y) }{\lt(n^2 \int f_*\mu\lt(B\lt(f(y),e^{-k_n}\rt)\rt) \ d\mu(y)\rt)^2} \nonumber \\
&+& \frac{ 4n^2g^2 \int f_*\mu\lt(B\lt(f(y),e^{-k_n}\rt)\rt)\ d\mu(y) +4n^3g \int f_*\mu\lt(B\lt(f(y),e^{-k_n}\rt)\rt)^2 \ d\mu(y)}{\lt(n^2 \int f_*\mu\lt(B\lt(f(y),e^{-k_n}\rt)\rt) \ d\mu(y)\rt)^2}. \nonumber \\
\end{eqnarray}

{This estimate is comparable to \eqref{pxp} (in the symbolic setting), except for the third term coming from our approximation of characteristic functions, and the terms will be dealt with in a similar way. To help the reader understanding the following majorations, we can observe that
\[ n^{-2}(\log n)^{-b}=e^{-k_n(\overline{C}_{f_*\mu}+\epsilon)}\lesssim\int f_*\mu\lt(B\lt(f(y),e^{-k_n}\rt)\rt) \ d\mu(y) \leq 1.\]
}
Recalling that $\rho=n^{-\delta}$, we can choose $\delta$ large enough (depending on $\xi, \beta, \overline{C}_{f_*\mu}, b$ and $\epsilon$) so that

\begin{equation}\label{term2}
\frac{ 2n^4e^{ \xi k_n}\rho^{\beta} \int f_*\mu\lt(B\lt(f(y),2e^{-k_n}\rt)\rt) \ d\mu(y) }{\lt(n^2 \int f_*\mu\lt(B\lt(f(y),e^{-k_n}\rt)\rt) \ d\mu(y)\rt)^2} \leq  \frac{1}{n}.
\end{equation}

Recalling that $g=\log(n^{\gamma})$, we can observe that we can choose $\gamma$ large enough (depending on $\delta, \overline{C}_{f_*\mu},\zeta,b$ and $\epsilon$) so that

\begin{equation}\label{term1}
\frac{n^4L^2\rho^{-2}e^{2 k_n}\theta_g}{\lt(n^2 \int f_*\mu\lt(B\lt(f(y),e^{-k_n}\rt)\rt)\ d\mu(y)\rt)^2} \leq \frac{1}{n} 
\end{equation}
and so that
\begin{equation}\label{term3}
\frac{4n^4\theta_ge^{\zeta k_n}}{\lt(n^2 \int f_*\mu\lt(B\lt(f(y),e^{-k_n}\rt)\rt)\ d\mu(y)\rt)^2} \leq \frac{1}{n} .
\end{equation}

For the fourth term we have
\begin{eqnarray}\label{term4}
\frac{4n^2g^2 \int f_*\mu\lt(B\lt(f(y),e^{-k_n}\rt)\rt) \ d\mu(y) }{\lt(n^2 \int f_*\mu\lt(B\lt(f(y),e^{-k_n}\rt)\rt)\ d\mu(y)\rt)^2} 
& \leq & \frac{4g^2}{n^2e^{-k_n(\overline{C}_{f_*\mu}+ \epsilon)}} \nonumber \\
& \leq & 4\gamma^2(\log n)^{2+b}.
\end{eqnarray}
To estimate the last term, one cannot use immediately the subadditivity as in \eqref{eq12}, thus we will use the following lemma.
\begin{Lem}[Lemma 14 \cite{BaLiRo}]
Let $Z\subset \mathbb{R}^N$ and let $\nu$ be a probability measure on $Z$. There exists a constant $K>0$ depending only on $N$ such that for every $r$ small enough
\[\int_Z\mu\left(B(y,r)\right)^2d\nu(y) \leq K\left(\int_Z \mu \left(B(y,r)\right)d\nu(y)\right)^{3/2}.\]
\end{Lem}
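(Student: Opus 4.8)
The plan is to discretize the ambient space $\mathds{R}^N$ by a grid of cubes and to reduce the statement to an elementary discrete inequality, in the same spirit as the cylinder estimate \eqref{eq12} in the symbolic case (with cylinders replaced by cubes of side comparable to $r$). Throughout I read the integrating measure and the measure used to weigh the balls as the same measure, which I denote $\nu$, since with two genuinely different measures the inequality is false (a one-cube example already breaks it). Fix $r$ small and let $\{Q_i\}_i$ be the partition of $\mathds{R}^N$ into half-open cubes of side $s=r/(2\sqrt N)$, and set $a_i=\nu(Q_i)$, so that $\sum_i a_i=1$. For each index $i$ let $N(i)=\{j: d(Q_i,Q_j)\le r\}$ denote the set of cubes within distance $r$ of $Q_i$; since $s$ is a fixed fraction of $r$, there is a constant $C_N$ depending only on $N$ with $\#N(i)\le C_N$ for every $i$, and the relation is symmetric, i.e. $j\in N(i)\iff i\in N(j)$.

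First I would record two geometric comparisons. Because $\diam Q_i=s\sqrt N<r$, every $y\in Q_i$ satisfies $Q_i\subset B(y,r)$, whence $\nu(B(y,r))\ge a_i$; integrating over each cube gives the lower bound
\[\int_Z \nu(B(y,r))\,d\nu(y)\ \ge\ \sum_i a_i^2.\]
Conversely, if $y\in Q_i$ then $B(y,r)$ meets only cubes $Q_j$ with $j\in N(i)$, so $\nu(B(y,r))\le\sum_{j\in N(i)}a_j$; squaring, applying Cauchy--Schwarz to the at most $C_N$ terms, and integrating yields
\[\int_Z \nu(B(y,r))^2\,d\nu(y)\ \le\ \sum_i a_i\Big(\sum_{j\in N(i)}a_j\Big)^2\ \le\ C_N\sum_i a_i\sum_{j\in N(i)}a_j^2.\]

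The heart of the argument is then the purely discrete inequality $\sum_i a_i\sum_{j\in N(i)}a_j^2\le C_N^{1/2}\big(\sum_i a_i^2\big)^{3/2}$. To prove it I would swap the order of summation using the symmetry of $N(\cdot)$, rewriting the left-hand side as $\sum_j a_j^2\sum_{i\in N(j)}a_i$, and then bound the inner sum by Cauchy--Schwarz, $\sum_{i\in N(j)}a_i\le (\#N(j))^{1/2}\big(\sum_i a_i^2\big)^{1/2}\le C_N^{1/2}\big(\sum_i a_i^2\big)^{1/2}$; factoring out this uniform bound leaves $\sum_j a_j^2$ times $\big(\sum_i a_i^2\big)^{1/2}$, which is exactly $\big(\sum_i a_i^2\big)^{3/2}$. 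Chaining the three displays gives
\[\int_Z \nu(B(y,r))^2\,d\nu(y)\ \le\ C_N^{3/2}\Big(\sum_i a_i^2\Big)^{3/2}\ \le\ C_N^{3/2}\Big(\int_Z \nu(B(y,r))\,d\nu(y)\Big)^{3/2},\]
so the claim holds with $K=C_N^{3/2}$. The only delicate point --- and the main thing to get right --- is the covering geometry: one must fix the cube side as a definite fraction of $r$ so that \emph{simultaneously} each cube lies inside every ball centered at its own points (needed for the lower bound) and each ball meets a uniformly bounded number of cubes (needed for the upper bound), which is what forces all the constants to depend on $N$ alone and not on $r$ or on $\nu$.
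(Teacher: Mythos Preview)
The paper does not supply its own proof of this lemma; it is quoted verbatim from \cite{BaLiRo} and used as a black box, so there is no in-paper argument to compare against. Your proof is correct. Your reading of the statement with $\mu=\nu$ is also the right one: as written with two genuinely distinct measures the inequality fails (e.g.\ $\mu=\delta_0$ and $\nu=\Leb$ on $[0,1]$ gives left side $\approx r$ and right side $\approx Kr^{3/2}$), and the paper's own application immediately afterward with the single measure $f_*\mu$ confirms this is a typo. The cube-discretization you carry out---partition by cubes of side a fixed fraction of $r$, use bounded overlap to sandwich balls between cubes, swap the double sum by symmetry of the neighbour relation, and finish with Cauchy--Schwarz on the at most $C_N$ neighbours---is the natural and standard route to this estimate, and is essentially the argument in \cite{BaLiRo}.
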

Applying the previous lemma with $Z=Y$ and  $\nu=f_*\mu$ we obtain

\begin{eqnarray}\label{term5}
\frac{4n^3g \int f_*\mu\lt(B\lt(f(y),e^{-k_n}\rt)\rt)^2 \ d\mu(y)}{\lt(n^2\int f_*\mu\lt(B\lt(f(y),e^{-k_n}\rt)\rt) \ d\mu(y)\rt)^2} 
& \leq & \frac{4g K }{n\lt(\int f_*\mu(B(f(y),e^{-k_n})) \ d\mu(y)\rt)^{1/2} } \nonumber \\
& \leq & \frac{4g K }{n}e^{\frac{k_n(\overline{C}_{f_*\mu}+ \epsilon)}{2}} \nonumber \\
& \leq & 4K \gamma (\log n)^{1+ \frac{b}{2}}.
\end{eqnarray}

Since $b<-4$ and substituing \eqref{term1}, \eqref{term2}, \eqref{term3},  \eqref{term4} and \eqref{term5} into \eqref{var/exp} we get
\begin{eqnarray*}
\mu \otimes \mu \lt(\lt\{(x,y): m_n^f(x,y) \geq e^{-k_n}\rt\}\rt) \leq   
\mathcal{O}((\log n)^{-1}).
\end{eqnarray*}

Thus, taking a subsequence $n_{\kappa}= \lceil e^{\kappa^2}\rceil.$ As in the proof of Theorem \ref{firstresult}, by Borel-Cantelli Lemma we obtain
$$
\underset{n \rightarrow \infty}{\underline{\lim}} \frac{\log m_n^f(x,y)}{-\log n} = \underset{\kappa \rightarrow \infty}{\underline{\lim}}\frac{\log m_{n_{\kappa}}^f(x,y)}{-\log n_{\kappa}} \geq \frac{2}{\underline{C}_{f_*\mu}+\epsilon}.
$$

Since $\epsilon$ can be arbitrarily small, the theorem follows.

\end{proof}

Following the idea of \cite{RS,Rousseau} that the study of observation of dynamical systems can be used to study random dynamical systems, we will show in the next section that the previous result can be applied to obtain information on the shortest distance between two random orbits.

\section{Shortest distance between orbits for random dynamical systems}\label{random}

Let $X\subset \mathbb{R}^N$ and let $(\Omega, \theta, \mathds{P})$ be a probability measure preserving system, where $\Omega$ is a metric space and $B(\Omega)$ its Borelian $\sigma$-algebra. We first introduce the notion of random dynamical system.

\begin{Df}
A random dynamical system $\mathcal{T}=(T_{\omega})_{\omega \in \Omega}$ on $X$ over $(\Omega, B(\Omega), \mathds{P}, \theta)$ is generated by maps $T_\omega$ such that $(\omega,x) \mapsto T_\omega(x)$ is measurable and satisfies:
$$T_\omega^0=Id \ \mbox{for all} \ \omega \in \Omega,$$
$$T_\omega^n= T_{\theta^{n-1}(\omega)}\circ \cdots \circ T_{\theta(\omega)} \circ T_\omega \ \mbox{for all} \ n \geq 1.$$
The map $S: \Omega \times X \to \Omega \times X$ defined by $S(\omega, x)=(\theta(\omega),T_\omega(x))$ is the dynamics of the random dynamical systems generated by $\mathcal{T}$ and is called skew-product.
\end{Df}
\begin{Df}
A probability measure $\mu$ is said to be an invariant measure for the random dynamical system $\mathcal{T}$ if it satisfies

\begin{itemize}
\item [1.] $\mu$ is $S$-invariant
\item [2.] $\pi_*\mu= \mathds{P}$
\end{itemize}
where $\pi: \Omega \times X \to \Omega$ is the canonical projection.
\end{Df}

Let $(\mu_\omega)_\omega$ denote the decomposition of $\mu$ on $X$, that is, $d\mu(\omega,x)=d\mu_\omega(x)d\mathds{P}(\omega)$. We denote by $\nu=\int \mu_\omega d\mathds{P}$ the marginal of $\nu$ on $X$.
\begin{Df}
We define the shortest distance between two random orbits by
$$m_n^{\omega, \tilde{\omega}}(x,\tilde{x}) =  \min_{i,j=0, \ldots, n-1} \lt(d\lt(T_\omega^i(x),T_{\tilde{\omega}}^j(\tilde{x})\rt)\rt).$$
\end{Df}
As in the deterministic case, we need a hypothesis for the measure and an (annealed) exponential decay of correlations for the random dynamical system. Namely,
\begin{description}
  \item[(a)] There exist $r_0>0, \ \xi \leq 0$ and $\beta>0$ such that for almost every $y \in X$ and any $r_0>r> \rho>0,$
$$\nu(B(y,r+\rho)\backslash B(y,r-\rho)) \leq r^{-\xi}\rho^{\beta}.$$

\item[(b)] (Annealed decay of correlations) $\forall n \in \mathds{N}^*$, $\psi$ and $\phi$ H\"older observables from $X$ to $\mathds{R}$,
$$\lt|\int_{\Omega \times X} \psi (T^n_\omega(x))\phi(x) \ d\mu(\omega,x) - \int_{\Omega \times X} \psi \ d\mu \int_{\Omega \times X} \phi \ d\mu \rt| \leq \|\psi \|_{\alpha}\|\phi \|_{\alpha} \theta_n$$
with $\theta_n=e^{-n}$.
\end{description}

\begin{Th}\label{theoremrandom}
Let $\mathcal{T}$ be a random dynamical system on $X$ over $(\Omega, B(\Omega), \mathds{P}, \theta)$ with an invariant measure $\mu$ such that $\underline{C}_{\nu}>0$. Then for $\mu \otimes \mu$-almost every $(\omega, x,\tilde{\omega},\tilde{x}) \in \Omega \times X \times \Omega \times X,$
$$
\underset{n \rightarrow \infty}{\overline{\lim}}\frac{\log m_n^{\omega, \tilde{\omega}}(x,\tilde{x})}{-\log n} \leq \frac{2}{\underline{C}_{\nu}}. \
$$
Moreover, if the random dynamical system satisfies assumptions $(a)$ and $(b)$, then
$$
\underset{n \rightarrow \infty}{\underline{\lim}}\frac{\log m_n^{\omega, \tilde{\omega}}(x,\tilde{x})}{-\log n} \geq \frac{2}{\overline{C}_{\nu}}
$$
and if ${C}_{\nu}$ exists, then
\begin{equation}\tag{\footnotesize{$\largestar \largestar \largestar$}}
  \underset{n \rightarrow \infty}{\lim}\frac{\log m_n^{\omega, \tilde{\omega}}(x,\tilde{x})}{-\log n} = \frac{2}{C_{\nu}} \ .
\end{equation}

\end{Th}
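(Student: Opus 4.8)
The plan is to derive Theorem~\ref{theoremrandom} from Theorems~\ref{firstresult} and~\ref{secondresult} by realizing the random dynamics as a deterministic system, namely the skew-product, equipped with a well-chosen observation. Work on the metric space $\Omega\times X$, endowed with the product metric $d_\Omega(\omega,\tilde\omega)+\|x-\tilde x\|$ and with the $S$-invariant measure $\mu$, and take as observation the canonical projection $f\colon\Omega\times X\to X\subset\mathds{R}^N$, $f(\omega,x)=x$. This $f$ is $1$-Lipschitz, and from the definition of an invariant measure for a random dynamical system one gets $f_*\mu=\nu$, so that $\underline{C}_{f_*\mu}=\underline{C}_\nu>0$ (and $\overline{C}_{f_*\mu}=\overline{C}_\nu$, $C_{f_*\mu}=C_\nu$) by hypothesis. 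Moreover $f(S^i(\omega,x))=T_\omega^i(x)$, hence
\[
m_n^f\big((\omega,x),(\tilde\omega,\tilde x)\big)=\min_{0\le i,j\le n-1}d\big(T_\omega^i(x),T_{\tilde\omega}^j(\tilde x)\big)=m_n^{\omega,\tilde\omega}(x,\tilde x),
\]
and $\mu\otimes\mu$ on $(\Omega\times X)^2$ is exactly the product measure appearing in the statement. Thus the problem reduces to the study of the shortest distance between observed orbits already carried out in Section~\ref{shortest}.

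For the first inequality I would apply Theorem~\ref{firstresult} to $(\Omega\times X,S,\mu)$ and $f$: since $\underline{C}_{f_*\mu}=\underline{C}_\nu>0$, it gives $\overline{\lim}_{n}\frac{\log m_n^{\omega,\tilde\omega}(x,\tilde x)}{-\log n}\le\frac{2}{\underline{C}_\nu}$ for $\mu\otimes\mu$-almost every $(\omega,x,\tilde\omega,\tilde x)$. For the lower bound I would apply Theorem~\ref{secondresult} to the same system, which amounts to checking hypotheses (H1) and (HA) for $(\Omega\times X,S,\mu)$ and the Lipschitz observation $f$. Hypothesis (HA) is exactly assumption (a), since it is a statement concerning the pushforward measure $f_*\mu=\nu$ alone. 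Hypothesis (H1), for H\"older $\psi,\phi\colon X\to\mathds{R}$, reads
\[
\Big|\int_{\Omega\times X}\psi(T_\omega^n(x))\,\phi(x)\,d\mu(\omega,x)-\int_X\psi\,d\nu\int_X\phi\,d\nu\Big|\le\|\psi\circ f\|_\alpha\|\phi\circ f\|_\alpha\,\theta_n,
\]
because $\int\psi\circ f\,d\mu=\int_X\psi\,d\nu$ and likewise for $\phi$; since $f$ is $1$-Lipschitz one has $\|\psi\circ f\|_\alpha\le\|\psi\|_\alpha$ on $\Omega\times X$, so this follows directly from the annealed decay of correlations (b). Theorem~\ref{secondresult} then yields $\underline{\lim}_{n}\frac{\log m_n^{\omega,\tilde\omega}}{-\log n}\ge\frac{2}{\overline{C}_\nu}$, and combining the two bounds gives the claimed identity whenever $C_\nu$ exists.

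The only genuinely delicate point is the translation of the mixing hypothesis: assumption (b) provides decay of correlations for H\"older observables defined on $X$, whereas (H1) for the skew-product concerns observables on the larger space $\Omega\times X$. The reduction works precisely because the observation $f$ sees only the $X$-coordinate, so that $\psi\circ f$ and $\phi\circ f$ are pullbacks of functions on $X$ whose H\"older norms on $\Omega\times X$ are dominated, through the Lipschitz constant of the projection, by their H\"older norms on $X$. I would make this explicit by fixing the product metric at the outset and recording the identities $f_*\mu=\nu$ and the inequality $\|\psi\circ f\|_\alpha\le\|\psi\|_\alpha$; with these in hand, Theorems~\ref{firstresult} and~\ref{secondresult} apply verbatim and no new estimates beyond those of Section~\ref{shortest} are required.
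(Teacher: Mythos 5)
Your proposal is correct and follows essentially the same route as the paper: realize the random dynamics as the skew-product $S$ on $\Omega\times X$ with the projection observation $f(\omega,x)=x$, note $f(S^i(\omega,x))=T^i_\omega(x)$, $m_n^f=m_n^{\omega,\tilde\omega}$ and $f_*\mu=\nu$, and invoke Theorems \ref{firstresult} and \ref{secondresult}. Your verification of (H1) and (HA) from (a) and (b) is sound (for the projection with the product metric one in fact has $\|\psi\circ f\|_\alpha=\|\psi\|_\alpha$, so (b) and (H1) coincide); the paper leaves this step implicit.
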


To prove this theorem, we will just apply Theorem \ref{firstresult} and Theorem \ref{secondresult} to the skew-product $S$ with a well-chosen observation, following the idea given in \cite{Rousseau}.

\begin{proof}
We use Theorem \ref{firstresult} and Theorem \ref{secondresult} for the dynamical system $(\Omega \times X, B(\Omega \times X), \mu, S)$ with the observation $f$ defined by
\begin{eqnarray*}
&& f: \Omega \times X \to X \\
&& \ \ \ \ \ \ (\omega,x) \mapsto x.
\end{eqnarray*}
{Indeed, with this particular observation $f$, studying the observed orbit of $(\omega,x)$ under the skew-product $S$ is similar to studying the random orbit of $x$ with respect to $\omega$ since
\[f(S^n(\omega,x))=f(\theta^n\omega,T^n_\omega(x))=T^n_\omega(x).\]}
Thus, for all $z \ \mbox{and} \ t \in  \Omega \times X$ we can link the shortest distance between two observed orbits and the shortest distance between two  random orbits. Set $z=(\omega,x)$ and $t=(\tilde{\omega},\tilde{x})$ then
\begin{eqnarray*}
m_n^f(z,t) &=& \min_{i,j=0, \ldots, n-1} \lt(d\lt(f\lt(S^i(\omega,x)\rt),f\lt(S^j(\tilde{\omega},\tilde{x})\rt)\rt)\rt) \\
&=& \min_{i,j=0, \ldots, n-1} \lt(d\lt(T_\omega^ix,T_{\tilde{\omega}}^j\tilde{x}\rt)\rt) \\
&=& m_n^{\omega, \tilde{\omega}}(x,\tilde{x}).
\end{eqnarray*}
Moreover, we can identify the pushforward measure:
$f_*\mu=\nu$. Therefore, in view of the lower and upper correlation dimensions, the following statement finishes the proof
$$\underline{C}_{f_*\mu}=\underline{C}_{\nu} \ \mbox{and} \ \overline{C}_{f_*\mu}=\overline{C}_{\nu}.$$

\end{proof}

In what follows, we present a collection of examples for which Theorem \ref{theoremrandom} holds. They illustrate some well-known random dynamical systems on the literature.  

\subsection{Non-i.i.d. random dynamical system} 

The first example is a non-i.i.d. random dynamical system for which it was computed recurrence rates in \cite{MaRousseau} and hitting times statistics in \cite{Rousseau}.

Consider the two linear maps which preserve Lebesgue measure $\Leb$ on $X = \mathbb{T}^1$, the one-dimensional torus:
\begin{eqnarray*}
&& T_1:X \to X      \quad \mbox{and} \quad  T_2: X \to X \\
&& \ \qquad x \mapsto 2x \hspace{2.1cm} x \mapsto 3x.
\end{eqnarray*}
The following skew product gives the dynamics of the random dynamical system:
\begin{eqnarray*}
&& S: \Omega \times X \to \Omega \times X \\
&& \ \ \ \ \ \ (\omega,x) \mapsto (\theta(\omega), T_\omega (x))
\end{eqnarray*}
with $\Omega=[0,1], \ T_\omega=T_1$ if $\omega \in [0,2/5)$ and $T_\omega=T_2$ if $\omega \in [2/5,1]$ where $\omega$ is the following piecewise linear map:

$$\theta(\omega)=\left\{\begin{array}{ll}
2\omega &  \text{ if } \, \omega\in [0,1/5) \\
3\omega-1/5 &  \text{ if } \, \omega \in [1/5,2/5) \\
2\omega-4/5 &  \text{ if }\, \omega \in [2/5,3/5) \\
3\omega/2-1/2 &  \text{ if } \,\omega \in [3/5,1].
\end{array}\right.$$

Note that the random orbit is constructed by choosing one of these two maps following a Markov process with the stochastic matrix

$$A=\left(\begin{array}{ll}
1/2 &  1/2 \\
1/3 &  2/3
\end{array}\right).$$

The associated skew-product $S$ is $\Leb \otimes \Leb$-invariant. It is easy to check that Lebesgue measure satisfies (a). Moreover, by \cite{Baladi} the skew product $S$ has an exponential decay of correlations. Since in this example $\nu=\Leb$, we have $C_\nu=1$ and Theorem \ref{theoremrandom} implies that for $\Leb \otimes \Leb \otimes \Leb \otimes \Leb$-almost every $(\omega,x, \tilde{\omega},\tilde{x}) \in [0,1]\times  \mathbb{T}^1 \times [0,1] \times  \mathbb{T}^1$,

$$  \underset{n \rightarrow \infty}{\lim}\frac{\log m_n^{\omega, \tilde{\omega}}(x,\tilde{x})}{-\log n} = 2.$$

\subsection{Randomly perturbed dynamical systems} 

Consider a deterministic dynamical system $(X, T, \mu)$ where $X$ is a compact Riemannian manifold, $T$ is a map and $\mu$ is a $T$-invariant probability measure.
We will present a random dynamical system constructed by perturbing the map $T$ with a random additive noise. For $\epsilon>0$, set $\Lambda_\epsilon=B(0,\epsilon)$ and let $\mathcal{P}_\epsilon$ be a probability measure on $\Lambda_\epsilon.$ For each $\omega \in \Lambda_\epsilon$, we denote the family of transformations $\{T_\omega\}_\omega$ where the map $T_\omega: X \to X$ are given by
$$T_\omega(x)=T(x)+\omega.$$
Denote $\mathcal{T}$ the i.i.d dynamical system on $X$ over $(\Lambda_\epsilon^{\mathds{N}}, \mathcal{P}_\epsilon^{\mathds{N}}, \sigma).$ In the case where $X=\mathbb{T}^d$, for some expanding and piecewise expanding maps, if $\epsilon$ is sufficiently small, it was proved (see e.g. \cite{AyFV,BaladiYoung,Viana}) that the random dynamical system has a stationary measure $\mu_\epsilon$ absolutely continuous with respect to Lebesgue measure with density $h_\epsilon$ such that $0< \underline{h}_\epsilon \leq h_\epsilon \leq \overline{h}_\epsilon < \infty$ and the system has an exponential decay of correlations.
Thus, since the assumptions (a) and (b) are satisfied one can apply Theorem \ref{theoremrandom} and obtain information on the behavior of the shortest distance $m_n^{\omega, \tilde{\omega}}$.

\subsection{Random hyperbolic toral automorphisms} 

A linear toral automorphism is a map $T:\mathbb{T}^2\rightarrow\mathbb{T}^2$ defined by the matrix action
$x\mapsto Ax$, where the matrix $A$ has integer entries and  $\det A=\pm 1$. We say that $T$ is hyperbolic if $A$ has
eigenvalues with modulus different from 1. For more simplicity, we will use the notation $A$ for both the matrix and the associated automorphism.

For an hyperbolic toral automorphism $A$, we denote $E_u^{A}$ the subspace spanned by $e_u^A$, the eigenvector associated to the eigenvalue whose absolute value is greater than 1 and we denote $E_s^{A}$ the subspace spanned by $e_s^A$, the eigenvector associated to the eigenvalue whose absolute value is less than 1.

Following the definition from \cite{AySt}, we say that a pair $(A_0,A_1)$ of hyperbolic toral automorphisms has the cone property if there exists an expansion cone $\mathcal{E}$ such that
\begin{enumerate}
 \item $A_i\mathcal{E}\subset \mathcal{E}$,
 \item there exists $\lambda_{\mathcal{E}}>1$ such that $\vert A_i x\vert\geq \lambda_{\mathcal{E}}\vert x\vert$ for $x\in\mathcal{E}$,
 \item  $E_u^{A_i}\cap \partial \mathcal{E}={0}$, where $\partial \mathcal{E}$ denote the boundary of $\mathcal{E}$,
\end{enumerate}
and there exists a contraction cone $\mathcal{C}$ such that $\mathcal{C}\cap \mathcal{E}={0}$ and
\begin{enumerate}
 \item $A_i^{-1}\mathcal{C}\subset \mathcal{C}$,
 \item there exists $\lambda_{\mathcal{C}}<1$ such that $\vert A_i^{-1} x\vert\geq \lambda_{\mathcal{C}}^{-1}\vert x\vert$ for $x\in\mathcal{C}$,
 \item $E_s^{A_i}\cap \partial \mathcal{C}={0}$.
\end{enumerate}

One can observe that for example a pair of hyperbolic toral automorphisms with positive entries, or a pair of
hyperbolic toral automorphisms with negative entries, has the cone property.

Let $\Lambda=\{0,1\}$ and $\theta=\sigma$ be the left shift on $\Lambda^{\mathds{N}}$. Let $A_0$, $A_1$ two hyperbolic automorphisms satisfying the cone property. Let $A_0$ be chosen with a probability $q$ and $A_1$ with a probability $1-q$, i.e. $\mathds{P}=\mathcal{P}^{\mathds{N}}$ with $\mathcal{P}(0)=q$ and $\mathcal{P}(1)=1-q$.

Then, for the i.i.d. random dynamical system on $\mathbb{T}^2$ over $(\Lambda^\mathds{N},\mathcal{P}^\mathds{N},\sigma)$, the Lebesgue measure is stationary (and thus hypothesis (a) is satisfied) and the system has an exponential decay of correlations (see \cite{AySt}).

Note that $\nu=\Leb \otimes \Leb$ implies that $C_\nu=2.$
Then, by Theorem \ref{theoremrandom} we get for $\mathds{P} \otimes\Leb\otimes \mathds{P} \otimes \Leb$-almost every $(\omega,x, \tilde{\omega},\tilde{x}) \in \Omega \times \mathbb{T}^2 \times \Omega \times \mathbb{T}^2,$

$$  \underset{n \rightarrow \infty}{\lim}\frac{\log m_n^{\omega, \tilde{\omega}}(x,\tilde{x})}{-\log n} = 1.$$

\subsection*{Acknowledgements}
This work is partially supported by CNPQ and FAPESB. AC is partially supported by CAPES. RL is partially supported by FAPESP (grant 2014/19805-1), and CNPQ PDJ grant (process 406324/2017-4). JR was partially supported by FCT project PTDC/MAT-PUR/28177/2017, with national funds, and by CMUP (UID/MAT/00144/2019), which is funded by FCT with national (MCTES) and European structural funds through the programs FEDER, under the partnership agreement PT2020. This work is part of the Universal FAPEMIG project "Din\^amica de recorr\^encia para shifts aleat\'orios, e processos com lapsos de mem\'oria" (process APQ-00987-18). {The authors would like to thank the anonymous referee for helpful comments, corrections and suggestions.}

\vspace{5cm}

  A.~Coutinho, \textsc{Instituto de Matem\'atica e Estat\'istica, Universidade de S\~ao Paulo, \\ Rua do Mat\~ao, 1010, CEP 05508-090, S\~ao Paulo-SP, Brazil}\par\nopagebreak
  \textit{E-mail address:}: \texttt{adcoutinho.s@gmail.com}

\vspace{1cm}

  R.~Lambert, \textsc{Faculdade de Matem\'atica, Universidade Federal de Uberl\^andia, \\ Av. Jo\~ao Naves de Avila, 2121, CEP 38408-100, Uberl\^andia-MG, Brazil}\par\nopagebreak
  \textit{E-mail address:} \texttt{rodrigolambert@ufu.br}

\textit{URL:} \texttt{https://sites.google.com/view/rodrigolambert}

\vspace{1cm}

  J.~Rousseau, \textsc{Departamento de Matem\'atica, Faculdade de Ci\^encias da Universidade do Porto, Rua do Campo Alegre, 687, 4169-007 Porto, Portugal}\par\nopagebreak
  \textsc{Departamento de Matem\'atica, Universidade Federal da Bahia, Av. Ademar de Barros s/n, 40170-110 Salvador, Brazil}\par\nopagebreak

  \textit{E-mail address:} \texttt{jerome.rousseau@ufba.br}
  
   \textit{URL:} \texttt{http://www.sd.mat.ufba.br/$\sim$jerome.rousseau}

\end{document}